%
%
%
%
%
%
%
%
%

\documentclass[12pt]{article}
\input epsf
\usepackage{amsmath}

\usepackage{amssymb}
\usepackage{theorem}

\usepackage{enumerate}

\usepackage{mathrsfs}

\usepackage{geometry}

\sloppy
\pagestyle{plain}

\numberwithin{equation}{section}

\textheight=8.0in
\textwidth=6.0in
\addtolength{\oddsidemargin}{-.25in}

\newtheorem{Theorem}{Theorem}[section]
\newtheorem{Lemma}[Theorem]{Lemma}

\newtheorem{cor}[Theorem]{Corollary}
{\theorembodyfont{\rmfamily}

\newtheorem{Remark}[Theorem]{Remark}
}

\newcommand{\qed}{\hfill \mbox{\raggedright \rule{.07in}{.1in}}}

\newenvironment{proof}{\vspace{1ex}\noindent{\bf
Proof}\hspace{0.5em}}{\hfill\qed\vspace{1ex}}

\newcommand{\T}{{\mathbb T}}

\def\P{\ensuremath{\mathcal P}}

\def\F{\ensuremath{\mathcal F}}

\def\TF{\mathcal{T}}

\def\eps {\varepsilon}

\begin{document}
\bibliographystyle{plain}

\title {Almost sure invariance principle for sequential and non-stationary dynamical systems}

\author{Nicolai Haydn\thanks {University of Southern California, Los Angeles.
e-mail:$<$nhaysdn@math.usc.edu$>$.}
\and Matthew Nicol \thanks{Department of Mathematics,
University of Houston,
Houston Texas,
USA. e-mail: $<$nicol@math.uh.edu$>$.}
\and Andrew T\"or\"ok \thanks{Department of Mathematics,
University of Houston,
Houston Texas,
USA. e-mail: $<$torok@math.uh.edu$>$.}
\and Sandro Vaienti
\thanks{Aix Marseille Universit\'e, CNRS, CPT, UMR 7332, 13288 Marseille, France and
Universit\'e de Toulon, CNRS, CPT, UMR 7332, 83957 La Garde, France.
e-mail:$<$vaienti@cpt.univ-mrs.fr$>$.}}

\maketitle
\tableofcontents{}

\begin{abstract}
We establish almost sure invariance principles, a strong form of approximation by Brownian motion, for non-stationary time-series arising as observations on dynamical systems.  Our
examples include observations  on sequential expanding maps, perturbed dynamical systems, non-stationary sequences of functions on hyperbolic systems as well as applications to the shrinking target 
problem in expanding systems.

\end{abstract}

\section{Introduction}
\setcounter{equation}{0}

A recent breakthrough by Cuny and Merlev\`ede~\cite{Cuny_Merlevede}  establishes conditions under which  the almost sure invariance principle (ASIP) holds for reverse martingales.
The ASIP is a matching of  the trajectories of the dynamical system with a Brownian motion in such a way that the error is negligible in comparison with the Birkhoff sum.  Limit theorems such
as the central limit theorem, the functional central limit theorem and the  law of the iterated logarithm transfer from the Brownian motion to time-series generated by observations on the  dynamical system. 

Suppose $\{U_j\}$ is a sequence of random variables on a probability space $(X,\mu)$ with $\mu (U_j)=0$ for all $j$. Define $\sigma_n^2= \int (\sum_{j=1}^n U_j)^2d\mu$ and suppose that 
$\lim_{n\to \infty} \sigma_n^2=\infty$. We will say $(U_j)$ satisfies the ASIP if  there is a sequence of independent centered Gaussian random variables $(Z_j)$ such that, enlarging our probability space if necessary,
\[
\sum_{j=1}^n U_j =\sum_{j=1}^n Z_j + O (\sigma_n^{1-\gamma})
\]
almost surely for some $\gamma>0$ and furthermore
\[
\sum_{j=1}^n E[Z_j^2]= \sigma_n^2 + O(\sigma_n^{(1+\eta)} )
\]
for some $0<\eta<1$. 

If $(U_j)$ satisfies the ASIP then $(U_j)$ satisfies the (self-norming) CLT and 
\[
\frac{1}{\sigma_n} \sum_{j=1}^{n} U_j \to \mbox{N}(0,1)
\]
where  the convergence is in distribution.

Furthermore if $(U_j)$ satisfies the ASIP then $(U_j)$ satisfies the law of the iterated logarithm and 
\[
\limsup_n [\sum_{j=1}^n U_j ]/\sqrt{\sigma_n\log \log (\sigma_n)} =1
\]
while 
\[
\liminf_n [\sum_{j=1}^n U_j ]/\sqrt{\sigma_n\log \log (\sigma_n)} =-1
\]
In fact there is a matching of the Birkhoff sum $\sum_{j=1}^n U_j$ with a standard Brownian motion $B(t)$ observed at times $t_n=\sigma_n^2$ so that $\sum_{j=1}^n U_j=B(t_n)$ (plus error) almost surely.

In the Gordin~\cite{Gordin} approach
to establishing the central limit theorem (CLT), reverse martingale difference schemes arise naturally.  To establish distributional limit theorems for stationary dynamical systems, such as the  central limit theorem, it is possible to reverse time and use the martingale central limit theorem in backwards time to establish the CLT for the original system. This approach does not a priori work for the almost sure invariance principle, nor for  other almost sure limit theorems. To circumvent this problem Melbourne and Nicol~\cite{MN1,MN2} used  results of Philipp and Stout~\cite{Philipp_Stout} based upon
the Skorokhod embedding theorem to establish the ASIP for H\"older functions on a class of non-uniformly hyperbolic systems, for example those
modeled by Young Towers. Gou\"ezel~\cite{Gouezel} used spectral methods to give error rates in the ASIP for a wide class of dynamical systems, and his formulation does not require the assumption
of a Young Tower. Rio and Merlev\`ede~\cite{Merlevede_Rio} established the ASIP for a broader class of observations, satisfying only mild integrability conditions, on piecewise
expanding maps  of $[0,1]$.

We will need the following theorem of Cuny and Merlev\`ede:

\begin{Theorem}\cite[Theorem 2.3]{Cuny_Merlevede}\label{Cuny_Merlevede}
Let $(X_n)$ be a sequence of square integrable random variables adapted to a non-increasing filtration $(\mathcal{G}_n)_{n\in N}$. Assume that $E(X_n|\mathcal{G}_{n+1})=0$
a.s., that $\sigma_n^2:=\sum_{k=1}^n E(X_k^2)\to \infty$ and that $\sup_n E(X_n^2) <\infty$. Let $(a_n)_{n\in N}$ be a non-decreasing sequence of positive numbers such that
$(a_n/\sigma_n^2)_{n\in N}$ is non-increasing and $(a_n/\sigma_n)_{n\in N}$ is non-decreasing. Assume that
\begin{eqnarray*}
&\textbf{(A)\qquad}~&\sum_{k=1}^n (E(X_k^2|\mathcal{G}_{k+1})-E(X_k^2))=o(a_n)\qquad P-a.s.\\
&\textbf{(B)\qquad}~&\sum_{n\ge 1}a_n^{-v}E(|X_n|^{2v})<\infty~\mbox{ for some $1\le v\le 2$}
\end{eqnarray*}
Then enlarging our probability space if necessary it is possible to find a sequence $(Z_k)_{k\ge 1}$ of independent centered Gaussian
variables with $E(Z_k^2)=E(X_k^2)$ such that
\[
\sup_{1\le k\le n}\left|\sum_{i=1}^k X_i -\sum_{i=1}^k Z_i\right|
=o\!\left((a_n(|\log(\sigma^2_n/a_n)|+\log\log a_n))^{1/2}\right)\qquad P-a.s.
\]
\end{Theorem}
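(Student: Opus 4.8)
The statement above is Cuny and Merlev\`ede's; here is the approach one would take to prove it. The plan is to run a Skorokhod-embedding argument adapted to the non-increasing filtration, and then to compare the resulting random Brownian clock with the deterministic clock $k\mapsto\sigma_k^2$ via the modulus of continuity of Brownian motion. First I would enlarge the probability space so as to construct a standard Brownian motion $(B(t))_{t\ge0}$ and a non-decreasing sequence of random times $0=V_0\le V_1\le V_2\le\cdots$ with $\sum_{i=1}^kX_i=B(V_k)$ for all $k$, and with increments $\Delta_k:=V_k-V_{k-1}$ satisfying $E(\Delta_k\mid\mathcal{G}_{k+1})=E(X_k^2\mid\mathcal{G}_{k+1})$ and the Burkholder--Davis--Gundy bound $E\bigl(|\Delta_k-E(X_k^2\mid\mathcal{G}_{k+1})|^{v}\bigr)\le C_v\,E(|X_k|^{2v})$. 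This is legitimate precisely because $(\mathcal{G}_n)$ is decreasing and $E(X_k\mid\mathcal{G}_{k+1})=0$: given $\mathcal{G}_{k+1}$ the variable $X_k$ is conditionally centered, so a randomized conditional Skorokhod embedding applies --- run a fresh independent Brownian increment until a stopping time whose conditional law given $\mathcal{G}_{k+1}$ reproduces the conditional law of $X_k$ --- and concatenating these pieces for $k=1,2,\dots$ produces $B$ and the $V_k$.

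Next I would set $Z_k:=B(\sigma_k^2)-B(\sigma_{k-1}^2)$. These are independent centered Gaussians with $E(Z_k^2)=\sigma_k^2-\sigma_{k-1}^2=E(X_k^2)$, so the variance-matching conclusion holds exactly and $\sum_{i=1}^kZ_i=B(\sigma_k^2)$; everything then reduces to estimating $\sup_{1\le k\le n}|B(V_k)-B(\sigma_k^2)|$, for which I need $V_k$ close to $\sigma_k^2$. I would decompose
\begin{align*}
V_k-\sigma_k^2&=\sum_{j=1}^k\bigl(\Delta_j-E(X_j^2\mid\mathcal{G}_{j+1})\bigr)\\
&\quad+\sum_{j=1}^k\bigl(E(X_j^2\mid\mathcal{G}_{j+1})-E(X_j^2)\bigr).
\end{align*}
The second sum is $o(a_k)$ almost surely by hypothesis~\textbf{(A)}. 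The first sum is a sum of martingale-type increments, and hypothesis~\textbf{(B)} together with the BDG bound gives $\sum_ja_j^{-v}E\bigl(|\Delta_j-E(X_j^2\mid\mathcal{G}_{j+1})|^{v}\bigr)<\infty$; a Doob/Chow maximal inequality followed by a Kronecker-lemma argument (using that $(a_j)$ is non-decreasing) then yields $\sup_{k\le n}\bigl|\sum_{j=1}^k(\Delta_j-E(X_j^2\mid\mathcal{G}_{j+1}))\bigr|=o(a_n)$ a.s. Hence $\sup_{1\le k\le n}|V_k-\sigma_k^2|=o(a_n)$ a.s.

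Finally I would invoke a Cs\"org\H{o}--R\'ev\'esz / L\'evy-type modulus of continuity for Brownian motion: almost surely, for all large $T$ and all $0\le s\le t\le T$, $|B(t)-B(s)|\le C\bigl((t-s)\,(\log(T/(t-s))+\log\log T)\bigr)^{1/2}$. Applying this with $T\asymp\sigma_n^2$ and $t-s\le\sup_{k\le n}|V_k-\sigma_k^2|=o(a_n)$, and using the monotonicity hypotheses (it is $(a_n/\sigma_n^2)$ non-increasing and $(a_n/\sigma_n)$ non-decreasing that turn $\log(T/(t-s))$ and $\log\log T$ into $|\log(\sigma_n^2/a_n)|$ and $\log\log a_n$), gives $\sup_{1\le k\le n}|B(V_k)-B(\sigma_k^2)|=o\bigl((a_n(|\log(\sigma_n^2/a_n)|+\log\log a_n))^{1/2}\bigr)$ a.s., which is exactly the claimed bound.

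The main obstacle is the very first step: carrying out the Skorokhod embedding for a \emph{non-increasing} filtration and, crucially, doing it \emph{consistently} so that one single Brownian motion $B$ serves every horizon $n$ simultaneously --- the standard forward-martingale ASIP machinery would at best handle each reversed finite block $X_n,X_{n-1},\dots,X_1$ on its own, and stitching these together is where the genuine work lies. One also has to keep the randomizers used in the embedding independent of all the relevant $\sigma$-algebras, which is the real reason the probability space must be enlarged. By comparison, the matching of the logarithmic factors in the last step via the monotonicity of $(a_n)$ is delicate bookkeeping but not conceptually hard.
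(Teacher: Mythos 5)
This statement is not proved in the paper at all: Theorem~\ref{Cuny_Merlevede} is imported verbatim from Cuny and Merlev\`ede \cite{Cuny_Merlevede} and used as a black box, so there is no in-paper proof to measure your argument against. Judged on its own, your sketch follows the same strategy as the original proof in \cite{Cuny_Merlevede}: a Skorokhod-type embedding adapted to reverse martingales, control of the random clock via the decomposition of $V_k-\sigma_k^2$ into the fluctuation of the embedding times around the conditional variances (handled by a Chow-type maximal inequality plus Kronecker, driven by hypothesis \textbf{(B)}) and the fluctuation of the conditional variances around their means (which is exactly hypothesis \textbf{(A)}), followed by a Cs\"org\H{o}--R\'ev\'esz increment bound for Brownian motion, with the monotonicity assumptions on $(a_n/\sigma_n^2)$ and $(a_n/\sigma_n)$ entering precisely where you place them. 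The one point to press you on is the step you yourself flag as the crux: the existence of a single Brownian motion and non-decreasing times $V_k$ with $B(V_k)=\sum_{i\le k}X_i$ and $E(\Delta_k\mid\mathcal{G}_{k+1})=E(X_k^2\mid\mathcal{G}_{k+1})$ for a \emph{non-increasing} filtration is not a routine concatenation of conditional embeddings --- it is the content of the Scott--Huggins reverse-martingale Skorokhod embedding, and your outline should invoke (or reprove) that theorem explicitly rather than assert that the stitching works; as written, that assertion is the only genuine gap in an otherwise sound plan.
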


We use this result to  provide sufficient
conditions  to obtain  the  ASIP for H\"older or BV observations
on a large class of expanding sequential dynamical systems.  We also obtain
the ASIP for some other classes of non-stationary dynamical systems,
including ASIP limit laws for  the shrinking target problem on  a class of expanding maps and 
non-stationary observations on Axiom A dynamical systems.

  In some of our examples the variance  $\sigma_n^2$ grows linearly  $\sigma_n^2\ \sim n \sigma^2$ so that 
$S_n =\sum_{j=1}^n \phi_j \circ T^j$ is  approximated by $ \sum_{j=1}^n Z_j (=B(\sigma^2 n)) $
where $Z_j$ are iid Gaussian all with variance $ \sigma^2$ and $B(t)$ is standard
Brownian motion. We will call this case a standard ASIP with variance $\sigma^2$.

In other settings, like the shrinking target problem, $\sigma_n^2$ does not
grow linearly. In fact we don't know precisely its rate of increase,  just that it goes to infinity.
In these cases $S_n=\sum_{j=1}^n U_j $ is approximated by $\sum_{j=1}^nZ_j=B(\sigma_n^2)$  where the $Z_j$ are
independent Gaussian but not with same variance, in fact
$Z_j =B(\sigma_{j+1}^2)- B (\sigma_{j}^2)$ is a Brownian motion increment,
the time difference (equivalently variance) of which varies with $j$.

Part of the motivation for this work is to extend our statistical understanding of physical processes from the 
stationary to the non-stationary setting, in order to better model non-equilibrium or time-varying systems. Non-equilibrium statistical physics is
a very active field of research but ergodic theorists have until recently focused on the stationary setting. 
The notion of loss of memory for non-equilibrium dynamical systems was introduced and studied in the  work of  Ott, Stenlund and Young \cite{OSY}, but this notion only concerns the rate
of convergence of initial distributions  (in a metric on the space of measures) under the time-evolution afforded by the dynamics. In this paper we consider more refined 
statistics on a variety of non-stationary dynamical systems.

The term {\it sequential dynamical systems}, introduced by Berend and
Bergelson~\cite{Berend_Bergelson}, refers to a (non-stationary) system in
which a sequence of concatenation of maps $T_k\circ T_{k-1} \circ \ldots
\circ T_1$ acts on a space, where the maps $T_i$ are allowed to vary
with~$i$. The seminal paper by Conze and Raugi \cite{CR} considers the CLT
and dynamical Borel-Cantelli lemmas for such systems. Our work is based to
a large extent upon their work. In fact we show that the (non-stationary)
ASIP holds under the same conditions as stated in~\cite[Theorem 5.1]{CR} (which implies
the non-stationary CLT), provided a mild condition on the growth of the
variance is satisfied.

We consider families ${\cal F}$ of non-invertible maps $T_{\alpha}$ defined
on compact subsets $X$ of $\mathbb{R}^d$ or on the torus $\mathbb{T}^d$
(still denoted with $X$ in the following),  and non-singular with respect
to the Lebesgue or the Haar measure
i.e. $m(A) \not= 0 \implies m(T(A)) \not= 0$.
Such measures will be defined on the Borel sigma algebra ${\cal B}$. We will be mostly concerned with the case $d=1.$  We fix a family ${\cal F}$ and take a countable sequence of maps $\{T_k\}_{k\ge 1}$ from it: this sequence defines a {\em sequential dynamical system}. A {\em sequential orbit} will be defined by the concatenation
\begin{equation}\label{m}
\TF_n:=T_n\circ\cdots\circ T_1, \ n\ge 1
\end{equation}
We  denote with $P_{\alpha}$ the Perron-Frobenius (transfer) operator associated to $T_{\alpha}$ defined by the duality relation
$$
\int_M P_{\alpha}f \ g\ dm\ = \ \int_M f\ g\circ T_{\alpha} \ dm,\;\; \mbox{ for all } f\in \mathscr{L}^1_m, \ g\in \mathscr{L}^{\infty}_m
$$
Note that here the transfer operator $P_{\alpha}$ is defined with respect to the reference measure $m$, in later sections we will consider the transfer operator 
defined by duality with respect to a natural invariant measure. 

Similarly to (\ref{m}), we define the composition of operators as
\begin{equation}\label{o}
\P_n:=P_n\circ\cdots\circ P_1, \ n\ge 1
\end{equation}
It is easy to check that duality persists under concatenation, namely
\begin{equation}\label{c}
\int_M g  (\TF_n) \  f \ dm=\int_M g (T_n\circ\cdots\circ T_1) \ f\ dm\ =\ \int_M g(\ P_n\circ\cdots\circ P_1f )\ dm\ =  \int_M g\ (\P_n f) \ dm
\end{equation}
To deal with probabilistic features of these systems, the martingale approach is  fruitful. We now introduce the basic concepts and notations.

We define ${\cal B}_n:=\TF_n^{-1} {\cal B},$ the $\sigma$-algebra associated to the $n$-fold pull back
 of the Borel $\sigma$-algebra ${\cal B}$ whenever $\{T_k\}$ is a given sequence in the family ${\cal F}.$
  We set ${\cal B}_{\infty}=\bigcap_{n\ge 1} \TF_n^{-1} {\cal B}$ the {\em asymptotic
   $\sigma$-algebra}; we say that the sequence $\{T_k\}$ is {\em exact} if ${\cal B}_{\infty}$ is trivial.
    We take $f$ either in $\mathscr{L}^1_m$ or in $\mathscr{L}^{\infty}_m$ whichever makes sense in the following expressions. It was proven in \cite{CR} that  for $f\in \mathscr{L}^{\infty}_m$  the quotients $|\P_nf/\P_n1|$ are bounded by $\|f\|_{\infty}$ on $\{\P_n1>0\}$ and $\P_nf(x)=0$ on the set $\{\P_n1=0\},$
    which allows us to define  $|\P_nf/\P_n1|=0$ on $\{\P_n1=0\}.$ We therefore have, the expectation
     being taken w.r.t.\ the Lebesgue measure:
\begin{equation}\label{e1}
\mathbb{E}(f|{\cal B}_k)\ = (\frac{\P_kf}{\P_k1})\circ \TF_k
\end{equation}
\begin{equation}\label{e2}
\mathbb{E}(\T_lf|{\cal B}_k)\ = (\frac{P_k\cdots P_{l+1}(f\P_l1)}{\P_k1})\circ \TF_k, \ 0\le l\le k\le n
\end{equation}
Finally the martingale convergence  theorem ensures that for $f\in \mathscr{L}^1_m$ there is
convergence of the conditional expectations $(\mathbb{E}(f|{\cal B}_n))_{n\ge 1}$ to
$\mathbb{E}(f|{\cal B}_{\infty})$ and therefore
$$
\lim_{n\rightarrow \infty} ||(\frac{\P_nf}{\P_n1})\circ \TF_n-\mathbb{E}(f|{\cal B}_{\infty})||_1=0,
$$
the convergence being $m$-a.e.

\section{Background and assumptions.}
In \cite{CR} the authors studied extensively a class of $\beta$ transformations. We consider  a similar  class  of examples and we will also provide  some new  examples for the theory developed in the next section. For each map we will give as well  the properties needed the prove the ASIP; in particular we require  two assumptions which we call, following~\cite{CR}, the~(DFLY) and~(LB) conditions. \\

Property~(DFLY) is a uniform Doeblin-Fortet-Lasota-Yorke inequality for
concatenations of transfer operators; to introduce it we first need to
choose a suitable couple of adapted spaces. Due to the class of maps
considered here, we will consider a Banach space ${\cal V} \subset \mathscr{L}^1_m$
($1\in\mathcal{V}$)
of functions over $X$ with norm $||\cdot||_{\alpha}$, such that
$\|\phi\|_{\infty} \le C \|\phi\|_{\alpha}$.

 For example we could let  ${\cal V} $ be the Banach space  of bounded variation functions over $X$ with norm $||\cdot||_{BV}$ given by the sum of the $\mathscr{L}^1_m$ norm and the total variation $|\cdot|_{bv}.$ or we could take ${\cal V}$ to be the space of Lipschitz or H\"older functions.

\noindent {\bf Property (DFLY):}
Given the family ${\cal F}$  there exist constants $A, B<\infty, \rho\in(0,1),$ such that for any $n$ and any sequence of operators $P_n,\cdots,P_1$ in ${\cal F}$ and any $f\in {\cal V}$ we have
\begin{equation}\label{LY2}
\|P_n\circ \cdots\circ P_1 f\|_{\alpha}\le A \rho^n \|f\|_{\alpha}+B\|f\|_1
\end{equation}

\noindent {\bf Property (LB)}:
 There exists $\delta>0$ such that for any sequence  $P_n,\cdots,P_1$ in ${\cal F}$  we have
 the uniform lower bound
\begin{equation}\label{MIN}
\inf_{x\in M}P_n\circ \cdots\circ P_11(x)\ge \delta,  \quad \forall n\ge 1.
\end{equation}

\section{ASIP for sequential expanding maps of the interval.}

In this section we show that with an additional growth rate condition on the variance the assumptions of~\cite[Theorem 5.1]{CR} imply not just the CLT but the ASIP
as well.

Let $\mathcal{V}$ be a Banach space with norm $\|.\|_{\alpha}$ such that $\|\phi\|_{\infty} \le C \|\phi\|_{\alpha}$. If $(\phi_n)$ is a sequence in $\mathcal{V}$ define $\sigma_n^2=E(\sum_{i=1}^n \tilde{\phi}_i(T_i\cdots T_1 ) )^2$ where
$\tilde{\phi}_n=\phi_n-m(\phi(T_n\cdots T_1))$. 
We write $E[\phi]$ for the expectation of $\phi$ with respect to Lebesgue measure.

\begin{Theorem}\label{Main_BV}
Let $(\phi_n)$ be a sequence in $\mathcal{V}$ such that $\sup_n
\|\phi_n\|_{\alpha} <\infty$ and hence $\sup_n E|\phi_n|^4<\infty$.
Assume~(DFLY) and~(LB) and $\sigma_n \ge n^{1/4+\delta}$ for some $0<\delta
< \frac{1}{4}$. Then $(\phi_n\circ \TF_n)$ satisfies the ASIP i.e.
enlarging our probability space if necessary it is possible to find a sequence $(Z_k)_{k\ge 1}$ of independent centered Gaussian
variables  $Z_k$ such that for any $\beta<\delta$
\[
\sup_{1\le k\le n}|\sum_{i=1}^k \tilde{\phi}_i(T_i\cdots T_1 ) -\sum_{i=1}^k Z_i|=o(\sigma_n^{1-\beta}) \qquad m-a.s.
\]
Furthermore   $ \sum_{j=1}^{n} E[Z_i^2]=\sigma_n^2+O(\sigma_n)$.
\end{Theorem}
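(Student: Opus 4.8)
The plan is to realize $(\tilde\phi_n\circ\TF_n)$ as a reverse martingale difference array (up to a controlled coboundary) and then apply Theorem~\ref{Cuny_Merlevede} with $a_n$ chosen as a suitable power of $\sigma_n^2$. Concretely, following Conze--Raugi, I would set $X_n$ to be the reverse martingale differences associated with the decomposition $\tilde\phi_n\circ\TF_n = X_n + (\psi_{n}\circ\TF_{n-1} - \psi_{n}'\circ\TF_n)$, where the corrector $\psi_n$ is built from the operators $P_k\cdots P_{l+1}$ acting on $\tilde\phi_l\,\P_l1$ via formula~\eqref{e2}; the~(DFLY) inequality~\eqref{LY2} together with the lower bound~\eqref{MIN} gives that these correctors are uniformly bounded in $\mathcal{V}$ (hence in $\mathscr{L}^\infty$) and that the telescoping coboundary terms contribute only $O(1)$, which is certainly $o(\sigma_n^{1-\beta})$. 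The filtration is $\mathcal{G}_n = \mathcal{B}_n = \TF_n^{-1}\mathcal{B}$, which is non-increasing as required, and $E(X_n\mid\mathcal{G}_{n+1})=0$ holds by construction. The uniform bound $\sup_n\|\phi_n\|_\alpha<\infty$ gives $\sup_n E[X_n^2]<\infty$ and in fact $\sup_n E|X_n|^4<\infty$, via $\|\cdot\|_\infty\le C\|\cdot\|_\alpha$.

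Next I would verify the two hypotheses of Theorem~\ref{Cuny_Merlevede}. For~(B) I take $v=2$: since $E|X_n|^4$ is uniformly bounded and $\sigma_n\ge n^{1/4+\delta}$, one has $\sigma_n^2\ge n^{1/2+2\delta}$, so choosing $a_n$ comparable to $\sigma_n^{2-\epsilon}$ for small $\epsilon>0$ makes $\sum_n a_n^{-2}E|X_n|^4$ comparable to $\sum_n n^{-(1+\epsilon')}<\infty$ for a suitable $\epsilon'>0$; one must also check $a_n$ is non-decreasing, $(a_n/\sigma_n^2)$ non-increasing, and $(a_n/\sigma_n)$ non-decreasing, which all follow since $1\le 2-\epsilon\le 2$ and $\sigma_n^2$ is non-decreasing (WLOG, after discarding finitely many terms and using that $\sigma_n\to\infty$). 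For~(A) I must show $\sum_{k=1}^n\big(E(X_k^2\mid\mathcal{G}_{k+1})-E(X_k^2)\big)=o(a_n)$ a.s. This is the analogue of the variance-control step in~\cite{CR}; the idea is to write $E(X_k^2\mid\mathcal{B}_{k+1})$ using the transfer operators and expand the difference into a sum whose terms decay geometrically in an auxiliary index thanks to~(DFLY), then bound the fluctuations by a maximal/Gal-Koksma type argument or by a direct $\mathscr{L}^2$ estimate on $\sum_k(\cdots)$ showing it is $O(n^{1/2+\iota})$, which beats $a_n\approx\sigma_n^{2-\epsilon}\ge n^{(1/2+2\delta)(2-\epsilon)}$ once $\epsilon$ is small.

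Finally, Theorem~\ref{Cuny_Merlevede} outputs $(Z_k)$ independent centered Gaussian with $E[Z_k^2]=E[X_k^2]$ and error $o\big((a_n(|\log(\sigma_n^2/a_n)|+\log\log a_n))^{1/2}\big)$. Since $a_n\approx\sigma_n^{2-\epsilon}$, the logarithmic factors are $O(\log\sigma_n)$, so the error is $o(\sigma_n^{1-\epsilon/2}\sqrt{\log\sigma_n})=o(\sigma_n^{1-\beta})$ for any $\beta<\epsilon/2$, and by taking $\epsilon$ close to $2\delta/(1/2+2\delta)$ (or simply small enough relative to $\delta$) one covers every $\beta<\delta$; I would also need the coboundary remainder, which is $O(1)$, absorbed here. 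The relation $\sum_{j=1}^n E[Z_j^2]=\sum_{j=1}^n E[X_j^2]$ differs from $\sigma_n^2$ only by the cross terms between the martingale part and the coboundary, and since the coboundary is a uniformly bounded telescoping term, $|\sigma_n^2-\sum_j E[X_j^2]|=O(\sigma_n)$ by Cauchy--Schwarz, giving the last claim. The main obstacle is hypothesis~(A): establishing the almost sure control of $\sum_k\big(E(X_k^2\mid\mathcal{B}_{k+1})-E(X_k^2)\big)$ with a rate $o(a_n)$, since this requires a quantitative decorrelation estimate for the conditional second moments along the sequential orbit rather than the soft $\mathscr{L}^1$ martingale convergence that sufficed for the CLT in~\cite{CR}.
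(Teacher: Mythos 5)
Your proposal is correct and follows essentially the same route as the paper: the Conze--Raugi martingale--coboundary decomposition with uniformly bounded correctors, the choice $a_n=\sigma_n^{2-\epsilon}$ with $\epsilon<2\delta$, condition~(B) with $v=2$ from the uniform fourth moments, and condition~(A) via an $\mathscr{L}^2$ estimate upgraded to an almost sure bound by Gal--Koksma. The quantitative decorrelation step you flag as the main obstacle is precisely \cite[Theorem 4.1]{CR}, which gives $\int\bigl[\sum_{k\le n}\bigl(E(U_k^2\mid\mathcal{B}_{k+1})-E(U_k^2)\bigr)\bigr]^2\,dm\le c\,\sigma_n^2$, so no new estimate is needed.
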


\begin{proof}
As above let $\P_n=P_nP_{n-1}\cdots P_1$ and  define as in~\cite{CR} the operators
$Q_n \phi=\frac{P_n(\phi\P_{n-1}1)}{\P_n 1}$. In particular $Q_nT_n\phi=\phi$. With  $h_n$ defined by
$$
h_n=Q_n \tilde{\phi}_{n-1}+Q_nQ_{n-1}\tilde{\phi}_{n-2}+\cdots + Q_nQ_{n-1}\cdots Q_1\tilde{\phi}_0
$$  
we then obtain that
\[
\psi_n=\tilde{\phi}_n + h_n -T_{n+1} h_{n+1}
\]
satisfies $Q_{n+1}\psi_n=0$.
 For convenience let us put $U_n=\TF_n\psi_n$, 
 where, as before, $\TF_n=T_n\circ\cdots\circ T_1$.
As proven by Conze and Raugi~\cite{CR},  $(U_n)$ is a sequence of reversed martingale
differences for the filtration $(\mathcal{B}_n)$.
Note that
\begin{equation}\label{eq.cohomU}
  \sum_{j=1}^n U_j =\sum_{j=1}^n \tilde{\phi}_j (\TF_j) + h_1(\TF_1)- h_n (\TF_{n+1})
\end{equation}
and  $\|h_n\|_{\alpha}$ is uniformly bounded. Hence
  \begin{eqnarray*}
  \left(\sum_{j=1}^n U_j\right)^2&=& \left(\sum_{j=1}^n \tilde{\phi}_j (\TF_j)\right)^2
  +\left(h_1(\TF_1)-h_{n+1}(\TF_{n+1})\right)^2\\
 && +2\left(\sum_{j=1}^n \tilde{\phi}_j (\TF_j)\right)\left(h_1(\TF_1)-h_{n+1}(\TF_{n+1})\right)
  \end{eqnarray*}
and integration yields
  $$
  E\left(\sum_{j=1}^nU_j\right)^2=\sigma_n^2+\mathcal{O}(\sigma_n),
  $$
  where we used that $h_n$ is uniformly bounded in $\mathscr{L}^{\infty}$ (and $\sigma_n\to\infty$).
 Since $\int U_jU_i=0$ if $i\not=j$ one has $\sum_{j=1}^n E(U_j^2)=E\left(\sum_{j=1}^nU_j\right)^2=\sigma_n^2+\mathcal{O}(\sigma_n)$.

 In Theorem~\ref{Cuny_Merlevede}, we will take $a_n$ to be $\sigma_n^{2-\epsilon}$, for some $\epsilon>0$ sufficiently small  ($\epsilon <2\delta$ will do) so that $a_n^2>n^{1/2+\delta^{'}}$
 for all large enough $n$, where $\delta^{'}>0$.
 Then $a_n/\sigma_n^2$ is non-increasing and $a_n/\sigma_n$ is non-decreasing.
 Furthermore Conze and Raugi show that
 $E[U_k^2|\mathcal{B}_{k+1}]=\TF_{k+1}(\frac{P_{k+1}(\psi_k^2\P_k1)}{\P_{k+1}1})$
 and  in~\cite[Theorem 4.1]{CR}  establish that
 \[
 \int [ \sum_{k=1}^n E(U_k^2|\mathcal{B}_{k+1})-E(U_k^2)]^2~dm \le c_1\sum_{k=1}^n E(U_k^2)
 \le c_2\sigma_n^2
 \]
 for some constants $c_1, c_2>0$.  This implies by the Gal-Koksma  theorem (see e.g.~\cite{Sprindzuk}) that
 \[
 \sum_{k=1}^n E(U_k^2|\mathcal{B}_{k+1})-E(U_k^2)=o(\sigma_n^{1+\eta})=o(a_n)
 \]
 $m$ a.s. for any $\eta\in(0,2-\eps)$.  Thus with our choice of $a_n$ we have verified Condition~(A)
 of Theorem~\ref{Cuny_Merlevede}. Taking $v=2$ in Condition~(B) of
 Theorem~\ref{Cuny_Merlevede} one then verifies that $\sum_{n\ge 1} a_n^{-v}E(|U_n|^{2v})<\infty$.

 Thus $U_n$ satisfies the ASIP with error term $o(\sigma_n^{1-\beta})$ for any $\beta<\delta$.
 This concludes the proof, in view of~\eqref{eq.cohomU} and the fact that $\|h_n\|_{\alpha}$
  is uniformly bounded.
 \end{proof}


\section{ASIP for the shrinking target problem: expanding maps.}

We now consider a fixed expanding map $(T,X,\mu)$  acting on the unit interval equipped with a unique ergodic absolutely continuous
invariant probability measure $\mu$. Examples to which our results apply include   $\beta$-transformations,  smooth expanding maps, the Gauss map, and Rychlik maps.
We will define the transfer operator  with respect to the natural invariant measure $\mu$, so that $\int (P f) g\,d\mu=\int fg(T) \,d\mu$ for all $f\in \mathscr{L}^1 (\mu)$,
$g\in \mathscr{L}^{\infty}(\mu)$. 

We assume that the  transfer operator $P$  is quasi compact in the  bounded variation norm so that we have exponential decay of correlations in the bounded variation norm
and $\|P^n \phi\|_{BV} \le C\theta^n \|\phi\|_{BV}$ for all $\phi \in BV(X)$ such that $\int \phi d\mu=0$ (here $C>0$ and $0<\theta <1$ are constants independent of $\phi$).

We say that $(T,X,\mu)$ has exponential decay in  the BV norm versus $\mathscr{L}^1 (\mu)$
if there exist constants $C>0$, $0< \theta <1$ so  that for all
$\phi \in BV$, $\psi \in \mathscr{L}^1(\mu)$ such that $\int \phi \,d\mu=\int \psi\, d\mu=0$:
\[
\left| \int \phi \psi\circ  T^n \,d\mu\right| \le C \theta^n \| \phi\|_{BV} \|\psi \|_{1}
\]
where $\|\psi \|_{1}=\int |\psi|\,d\mu$.
Suppose $\phi_j=1_{A_j}$ are indicator functions of a sequence of nested intervals $A_j$, where $\mu$ is the unique invariant
measure for  the map $T$.

The variance is given by  $\sigma_n^2=\mu(\sum_{i=1}^n \tilde{\phi}_i\circ T^i )^2$, where
$\tilde{\phi}=\phi-\mu (\phi)$ and $E_n=\sum_{j=1}^n \mu (\phi_j)$.

\begin{Theorem}\label{ASIP.sequential}
Suppose $(T,X,\mu)$ is a dynamical system with exponential decay in the BV norm versus
 $\mathscr{L}^1(\mu)$ and whose transfer operator $P$ satisfies $\|P^n \phi\|_{BV} \le C\theta^n \|\phi\|_{BV}$ for all $\phi \in BV(X)$ such that $\int \phi d\mu=0$.
  Suppose $\phi_j=1_{A_j}$ are indicator functions of a sequence
 of nested sets $A_j$ such that
$\sup_n \|\phi_n\|_{BV} <\infty$ and $\frac{C_1}{n^{\gamma}}\le \mu (A_n) $ ($C_1>0$)
where $0< \gamma<1$.
Then $(\phi_n\circ T^n)_{n\ge1}$ satisfies the ASIP i.e.\
enlarging our probability space if necessary it is possible to find a sequence $(Z_k)_{k\ge 1}$
of independent centered Gaussian
variables  $Z_k$ such that for all  $\beta<\frac{1-\gamma}2$
\[
\sup_{1\le k\le n}|\sum_{i=1}^k \tilde\phi_i\circ T^i -\sum_{i=1}^k Z_i|=o(\sigma_n^{1-\beta})\qquad\mu-a.s.
\]

Furthermore $ \sum_{i=1}^n E[Z_i^2]=\sigma_n^2+O(\sigma_n)$.
\end{Theorem}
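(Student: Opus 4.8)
\emph{Approach.} The plan is to imitate, essentially line by line, the reverse martingale--coboundary argument in the proof of Theorem~\ref{Main_BV}; since we now work with a single map $T$ whose transfer operator $P$ is already taken dual to the invariant measure $\mu$, the Conze--Raugi operators $Q_n$ of the previous section all collapse to $P$ (the invariant density cancels the normalisation $\P_n1/\P_{n-1}1$). I would set $h_n=\sum_{k=1}^{n}P^{k}\tilde\phi_{n-k}$ and $\psi_n=\tilde\phi_n+h_n-h_{n+1}\circ T$, check as in~\cite{CR} that $P\psi_n=0$, so that $U_n:=\psi_n\circ T^{n}$ is a sequence of reversed martingale differences for the decreasing filtration $\mathcal{B}_n:=T^{-n}\mathcal{B}$, and record the telescoping identity $\sum_{j=1}^{n}U_j=\sum_{j=1}^{n}\tilde\phi_j\circ T^{j}+h_1\circ T-h_{n+1}\circ T^{n+1}$. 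Because $\|P^{k}\phi\|_{BV}\le C\theta^{k}\|\phi\|_{BV}$ for $\mu$-centred $\phi\in BV$ and $\sup_n\|\phi_n\|_{BV}<\infty$, one gets $\sup_n\|h_n\|_{BV}<\infty$, hence $\sup_n\|h_n\|_{\infty}<\infty$; so the coboundary error is $O(1)$ and, once $\sigma_n\to\infty$ is known, $\sum_{j\le n}E(U_j^2)=E\big((\sum_{j\le n}U_j)^2\big)=\sigma_n^2+O(\sigma_n)$, which already yields the final assertion of the theorem.

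\emph{The main obstacle} I foresee is converting the hypothesis $\mu(A_n)\ge C_1 n^{-\gamma}$ into a usable lower bound on $\sigma_n$. Splitting $\sigma_n^2$ into diagonal and off-diagonal parts, the diagonal equals $\sum_{j\le n}\mu(A_j)(1-\mu(A_j))\asymp\sum_{j\le n}\mu(A_j)\ge C_1\sum_{j\le n}j^{-\gamma}\asymp n^{1-\gamma}$, while by $T$-invariance and exponential decay of correlations in $BV$ versus $\mathscr{L}^1(\mu)$ one has $|\mu(\tilde\phi_i\,\tilde\phi_j\circ T^{\,j-i})|\le C\theta^{\,j-i}\|\tilde\phi_i\|_{BV}\|\tilde\phi_j\|_1\le C'\theta^{\,j-i}\mu(A_j)$, so the off-diagonal sum is $\lesssim\sum_{j\le n}\mu(A_j)$; the delicate point is to argue that the diagonal dominates, giving $\sigma_n^2\asymp E_n:=\sum_{j\le n}\mu(A_j)\gtrsim n^{1-\gamma}$ and $\sigma_n\gtrsim n^{(1-\gamma)/2}$. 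In the same spirit I expect to need $E(U_n^{4})=\|\psi_n\|_{L^4}^4\le\|\psi_n\|_{\infty}^2\|\psi_n\|_{L^2}^2\lesssim(\log n)^2\mu(A_n)$, obtained by cutting the series defining $\psi_n$ at a logarithmic index, estimating the ``old'' summands $P^k\tilde\phi_{n-k}$ ($k\gtrsim\log n$) by $\|P^kf\|_{\infty}\le C''\theta^k\|f\|_{BV}$ and the ``recent'' ones by the $\mathscr{L}^2(\mu)$-contraction $\|P^kf\|_2\le\|f\|_2\le\sqrt{\mu(A_{n-k})}$.

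\emph{Conclusion.} Given $\beta<(1-\gamma)/2$, fix $\beta'\in(\beta,(1-\gamma)/2)$ and apply Theorem~\ref{Cuny_Merlevede} with $a_n=\sigma_n^{2(1-\beta')}$ (reading $\sigma_n^2$ as the non-decreasing surrogate $\sum_{j\le n}E(U_j^2)$, which differs from it by $O(\sigma_n)$): since $\tfrac12\le 1-\beta'\le1$, $a_n/\sigma_n$ is non-decreasing and $a_n/\sigma_n^2$ non-increasing. Condition~(A) is verified as in Theorem~\ref{Main_BV}: $\int\big(\sum_{k\le n}(E(U_k^2|\mathcal{B}_{k+1})-E(U_k^2))\big)^2d\mu\le c\sum_{k\le n}E(U_k^2)\le c'\sigma_n^2$ by~\cite[Theorem 4.1]{CR}, so the Gal--Koksma theorem gives $\sum_{k\le n}(E(U_k^2|\mathcal{B}_{k+1})-E(U_k^2))=o(\sigma_n^{1+\eta})=o(a_n)$ a.s.\ for any sufficiently small $\eta>0$ (one needs $\eta<1-2\beta'$). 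Condition~(B) with $v=2$ follows from Step~2, since $\sum_{n}a_n^{-2}E(U_n^{4})\lesssim\sum_{n}E_n^{-2(1-\beta')}(\log n)^2(E_n-E_{n-1})<\infty$, the series converging because $2(1-\beta')>1$. Theorem~\ref{Cuny_Merlevede} then provides independent centred Gaussians $Z_k$ with $E(Z_k^2)=E(U_k^2)$ and $\sup_{1\le k\le n}|\sum_{i\le k}U_i-\sum_{i\le k}Z_i|=o\big((a_n(|\log(\sigma_n^2/a_n)|+\log\log a_n))^{1/2}\big)=o(\sigma_n^{1-\beta'}(\log\sigma_n)^{1/2})=o(\sigma_n^{1-\beta})$; combining with the telescoping identity of Step~1 (error $O(1)=o(\sigma_n^{1-\beta})$) gives the ASIP for $(\tilde\phi_n\circ T^n)$, and the variance statement $\sum_{i\le n}E(Z_i^2)=\sum_{i\le n}E(U_i^2)=\sigma_n^2+O(\sigma_n)$ was obtained in Step~1.
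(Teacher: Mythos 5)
Your architecture is the same as the paper's: the observation that the Conze--Raugi operators $Q_n$ collapse to $P$, the coboundary $h_n=\sum_{k}P^k\tilde\phi_{n-k}$ with $\psi_n=\tilde\phi_n+h_n-h_{n+1}\circ T$, the reverse martingale differences $U_n=\psi_n\circ T^n$ and the telescoping identity, Gal--Koksma via \cite[Theorem 4.1]{CR} for Condition~(A), and a logarithmic cutoff of the series defining $h_n$ to control $E(U_n^4)$ for Condition~(B) are all exactly the steps of the paper's proof of Theorem~\ref{ASIP.sequential}. Two of your choices are in fact slightly cleaner than the paper's: the Abel-type summation $\sum_n(E_n-E_{n-1})(\log n)^2E_n^{-2(1-\beta')}<\infty$ for $2(1-\beta')>1$ avoids the paper's detour through $E_n^{2\alpha}/\mu(A_n)\ge cn^{2\alpha-\gamma}$, and taking $a_n=\bigl(\sum_{j\le n}E(U_j^2)\bigr)^{1-\beta'}$ handles the monotonicity requirements on $a_n$ in Theorem~\ref{Cuny_Merlevede} more explicitly than the paper does.

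The one genuine gap is the point you flag yourself: the lower bound $\sigma_n^2\ge cE_n\gtrsim n^{1-\gamma}$. This is load-bearing (you use $\sigma_n^2\ge E_n$ to replace $a_n^{-2}$ by $E_n^{-2(1-\beta')}$ in Condition~(B), and you need $\sigma_n\to\infty$ at a polynomial rate to absorb the logarithms), and your diagonal/off-diagonal sketch cannot close it: the off-diagonal estimate $\sum_{i<j}C\theta^{j-i}\mu(A_j)\le\frac{C}{1-\theta}E_n$ is of the \emph{same order} as the diagonal $\asymp E_n$, with a constant $\frac{2C}{1-\theta}$ that is typically larger than $1$, so ``the diagonal dominates'' does not follow from these bounds alone. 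The paper imports precisely this statement from \cite[Lemma 2.4]{HNVZ}, whose proof exploits the nestedness of the $A_j$ and a finer splitting of the correlation sum; you would need to cite that lemma or reproduce its argument. A second, minor, slip: your fourth-moment bound should read $\|\psi_n\|_2^2\lesssim(\log n)^2\mu(A_{n-q_n})$ with $q_n\asymp\log n$, not $(\log n)^2\mu(A_n)$ --- nestedness gives $\mu(A_{n-q_n})\ge\mu(A_n)$, the wrong direction, and no upper bound relating the two is assumed. This is harmless: as in the paper, re-index the Condition~(B) sum by $m=n-q_n$ (using $E_n\ge E_{n-q_n}$), after which your summation goes through unchanged.
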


\begin{proof}
From~\cite[Lemma 2.4]{HNVZ} we see that for sufficiently large $n$, $\sigma_n^2 \ge E_n \ge C n^{1-\gamma}$ for some constant $C>0$
 (note that there is a typo in the statement of ~\cite[Lemma 2.4]{HNVZ}
and $\limsup$ should be replaced with $\liminf$). We follow the proof of Theorem~\ref{Main_BV} based on~\cite[Theorem 5.1]{CR} taking $T_k=T$ for all $k$, $m$ as the invariant measure
$\mu$ and $f_n=1_{A_n}$. Note that conditions (DFLY) and (LB)  are satisfied automatically under the assumption that we have exponential decay of correlations in BV norm  versus
$\mathscr{L}^1$ and
the transfer operator $P$ is defined with respect to the invariant measure $\mu$
in the usual way by $\int (P f) g\,d\mu=\int fg(T) \,d\mu$ for all $f\in \mathscr{L}^1 (\mu)$,
$g\in \mathscr{L}^{\infty}(\mu)$. Hence $P1=1$ and in particular $|P\phi|_\infty\le|\phi|_\infty$.
 We write $P^n$ for the $n$-fold
composition of  the linear operator $P$. Let $\tilde{\phi_i}=\phi_i -\mu (\phi_i)$.
As before define
$h_n=\sum_{j=1}^nP^j\tilde{\phi}_{n-j}$ and write
\[
\psi_n=\tilde{\phi}_n + h_n - h_{n+1}\circ T.
\]
 Again, for convenience we put
 \[
 U_n= \psi_n\circ T^n
 \]
so that $(U_n)$ is a sequence of reversed martingale differences for the filtration $(\mathcal{B}_n)$.
As in the case of sequential expanding maps one shows that
 $\sum_{i=1}^n E[U_i^2]=\sigma_n^2+O(\sigma_n)$.  Condition~(A)
 of Theorem~\ref{Cuny_Merlevede} holds exactly as before.

In order to estimate $\mu (|U_n|^4)$ observe that by Minkovski's inequality ($p>1$)
$$
\|h_n\|_{p}\le \sum_{j=1}^{n-1}\|P^j\tilde\phi_{n-j}\|_{p},
$$
where
$$
\|P^j\tilde\phi_{n-j}\|_{p}\le \|P^j\tilde\phi_{n-j}\|_{BV}
\le c_1\vartheta^j\|\tilde\phi_{n-1}\|_{BV}\le c_2\vartheta^j
$$
for all $n$ and $j<n$. For small values of $j$ we use the estimate (as $|\tilde\phi_{n-j}|_\infty\le1$)
$$
\int\left|P^j\tilde\phi_{n-j}\right|^p\le\int\left|P^j\tilde\phi_{n-j}\right|
\le\int P^j(\phi_{n-j}+\mu(A_{n-j}))
=\int\phi_{n-j}\circ T^j+\mu(A_{n-j})
=2\mu(A_{n-j}).
$$
If we let $q_n$ be the smallest integer so that $\vartheta^{q_n}\le(\mu(A_{n-q_n}))^\frac1p$, then
$$
\|h_n\|_{p}
\le\sum_{j=1}^{q_n}\left(2\mu(A_{n-j})\right)^\frac1p+\sum_{j=q_n}^nc_2\vartheta^j
\le c_3q_n\left(\mu(A_{n-q_n})\right)^\frac1p.
$$
A similar estimate applies to $h_{n+1}$.
Note that $q_n\le c_4\log n$ for some constant $c_4$.
Let us put $p=4$; then factoring out yields
$$
\int\psi_n^4=\mathcal{O}(\mu(A_n))+\|h_n-h_{n+1}T\|_{4}^4
=\mathcal{O}(\mu(A_n))+\mathcal{O}(q_{n+1}^4\mu(A_{n-q_n})).
$$
Let $\alpha<1$ (to be determined below) and put $a_n=E_n^\alpha$, where
$E_n=\sum_{j=1}^n\mu(A_j)$. Then
$$
\sum_n\frac{\mu(U_n^4)}{a_n^2}
\le c_5\sum_n\frac{\mu(A_n)+q_{n+1}^4\mu(A_{n-q_n})}{E_n^{2\alpha}}
\le c_6\sum_n\frac{q_{n+1}^4\mu(A_{n-q_n})}{E_{n-q_n}^{2\alpha}}
\le c_7\sum_n\frac{q_{n+q_n+1}^4\mu(A_{n})}{E_{n}^{2\alpha}}.
$$
Since
$$
\frac{E_n^{2\alpha}}{\mu(A_n)}\ge\left(\sum_{j=1}^n(\mu(A_j)^\frac1{2\alpha}\right)^{2\alpha}
\ge\left(\sum_{j=1}^nj^{-\frac{\gamma}{2\alpha}}\right)^{2\alpha}\ge c_8n^{2\alpha-\gamma}
$$
we obtain the majorisations
$$
\sum_n\frac{\mu(U_n^4)}{a_n^2}
\le\sum_nq_{n+q_n+1}^4n^{\gamma-2\alpha}
\le c_9\sum_nn^{\gamma-2\alpha}\log^4n
$$
which converge if $\alpha>\frac{1+\gamma}2$.
We have thus verified Condition~(B) of Theorem~\ref{Cuny_Merlevede} with the
value $v=2$.

 Thus $U_n$ satisfies the ASIP with error term $o(E_n^\frac{1-\beta}2)=o(\sigma_n^{1-\beta})$
  for any $\beta<\frac{1-\gamma}2$

  Finally
  \[
  \sum_{j=1}^n U_j =\sum_{j=1}^n \tilde{\phi}_j (T^j) + h_1(T_1)- h_n (T^n)
  \]
  and as $ |h_n|$ is uniformly bounded  we conclude that $(\phi_j (T^j))$ satisfies  the ASIP with error term $o(\sigma_n^{1-\beta})$ for all  $\beta<\frac{1-\gamma}2$.
\end{proof}

\begin{Remark}
 We are unable with the present proof to obtain an ASIP in the case $\mu (A_n)=\frac{1}{n}$
  ($\gamma=1$) though a CLT has been proven~\cite{HNVZ,CR}.
 \end{Remark}

\section{ASIP for non-stationary observations on  invertible hyperbolic systems.}

In this section we will suppose that $B_{\alpha}$ is the Banach space of $\alpha$-H\"{o}lder functions on
 a compact metric space $X$ and  that $(T,X,\mu)$
is an ergodic measure preserving transformation. Suppose that $P$ is the $\mathscr{L}^2$ adjoint of the 
Koopman operator $U$, $U\phi=\phi\circ T$, with respect to $\mu$. First we consider  the non-invertible 
case and suppose 
that $\|P^n \phi \|_{\alpha} \le C\vartheta^n \|\phi\|_{\alpha}$ for all $\alpha$-H\"{o}lder $\phi$ such that 
$\int \phi \,d\mu=0$ where $C>0$ and $0<\vartheta<1$ are uniform constants. Under  this assumption 
we will establish 
 the ASIP for 
sequences of uniformly  H\"older functions satisfying a certain variance growth condition.
Then we will give a corollary which establishes  the ASIP for 
sequences of uniformly  H\"older functions on an Axiom A system satisfying the same  variance growth condition.

  The main difficulty in this setting is establishing a strong law of large numbers with 
error (Condition (A)) for  the squares  $(U_j^2)$ of the martingale difference scheme. We are not able to use  the Gal-Koksma lemma in the same way as we did in the setting of decay in bounded variation norm. Nevertheless our results, while clearly not optimal, point the way to establishing strong statistical properties for non-stationary time series of observations on hyperbolic systems.

\begin{Theorem}\label{NSHYP}
Suppose $\{\phi_j\}$ is a sequence of $\alpha$-H\"{o}lder functions such that $\int \phi_j \,d\mu=0$  and $\sup_j\|\phi_j\|_{\alpha} \le C_1$ for some constant $C_1<\infty$.

Let $\sigma_n^2=\int (\sum_{j=1}^n \phi_j\circ T^j )^2d\mu$ and suppose that $\sigma_n^2 \ge C_2n^{\delta}$ for some $\delta>\frac{\sqrt{17}-1}4$ and a constant $C_2<\infty$.  
Then there is a sequence of centered  independent Gaussian random variables
$(Z_j)$  such that, enlarging our probability space if necessary, 
\[
\sum_{j=1}^n \phi_j\circ T^j =\sum_{j=1}^n Z_j +\mathcal{O}(\sigma_n^{1-\beta})
\]
$\mu$ almost surely for any $\beta<\frac{\sqrt{17}-1}{4\delta}$.

Furthermore $ \sum_{i=1}^n E[Z_i^2]=\sigma_n^2+\mathcal{O}(\sigma_n)$.

\end{Theorem}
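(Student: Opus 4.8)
## Proof Proposal for Theorem \ref{NSHYP}

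The plan is to run the Conze--Raugi martingale scheme together with Theorem~\ref{Cuny_Merlevede}, exactly as in the proofs of Theorems~\ref{Main_BV} and~\ref{ASIP.sequential}, the only genuinely new work being the strong law with rate for the \emph{squares} of the martingale differences. First I would form the cohomological decomposition: set $h_n=\sum_{j=1}^n P^j\phi_{n-j}$, which is uniformly bounded in the $\alpha$--H\"older norm because $\int\phi_i\,d\mu=0$ forces $\|P^j\phi_{n-j}\|_\alpha\le C\vartheta^j\|\phi_{n-j}\|_\alpha\le CC_1\vartheta^j$; put $\psi_n=\phi_n+h_n-h_{n+1}\circ T$ and $U_n=\psi_n\circ T^n$. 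Using $P1=1$ and the identity $h_{n+1}=P\phi_n+Ph_n$ one checks $P\psi_n=0$, hence $E(U_n\mid\mathcal B_{n+1})=0$ for the non-increasing filtration $\mathcal B_n=T^{-n}\mathcal B$, so $(U_n)$ is a reverse martingale difference sequence. Telescoping gives $\sum_{j=1}^n\phi_j\circ T^j=\sum_{j=1}^n U_j+h_1\circ T-h_{n+1}\circ T^{\,n+1}$, with the coboundary term uniformly bounded, so it suffices to prove the ASIP for $(U_j)$; since $\sup_n\|\phi_n\|_\alpha<\infty$, the functions $\psi_n,\psi_n^2,\psi_n^4$ are uniformly bounded, and squaring and integrating the last identity yields $\sum_{j=1}^n E(U_j^2)=E\big((\sum_{j=1}^n U_j)^2\big)=\sigma_n^2+O(\sigma_n)$, which will also give the final assertion $\sum_{i=1}^n E(Z_i^2)=\sigma_n^2+O(\sigma_n)$ once Theorem~\ref{Cuny_Merlevede} supplies $(Z_i)$ with $E(Z_i^2)=E(U_i^2)$.

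The crux is condition~(A): $\sum_{k=1}^n\big(E(U_k^2\mid\mathcal B_{k+1})-E(U_k^2)\big)=o(a_n)$ a.s. Here $E(U_k^2\mid\mathcal B_{k+1})=(P\psi_k^2)\circ T^{\,k+1}$, so with $g_k:=\big(P\psi_k^2-\int\psi_k^2\,d\mu\big)\circ T^{\,k+1}$ I must show $\sum_{k\le n}g_k=o(a_n)$ a.s. I would estimate the quadratic form directly: for $j<k$, $E(g_j\mid\mathcal B_{k+1})=\big(P^{\,k+1-j}(\psi_j^2-\int\psi_j^2\,d\mu)\big)\circ T^{\,k+1}$, whose $\mathscr L^\infty$ norm is $\le C\vartheta^{\,k-j}$ by the spectral gap on $\alpha$--H\"older functions (using that $\psi_j^2$ is uniformly H\"older), so that $|\int g_jg_k\,d\mu|\le C\vartheta^{\,k-j}\|g_k\|_1$; combining this with the diagonal bound $\int g_k^2\,d\mu\le E(U_k^4)$, which is bounded uniformly in $k$, one obtains $\int\big(\sum_{m<k\le n}g_k\big)^2 d\mu\le c\,(n-m)$. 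The Gal--Koksma theorem (see~\cite{Sprindzuk}) then yields $\sum_{k\le n}g_k=o\big(n^{1/2}(\log n)^{3/2+\varepsilon}\big)$ a.s.\ for every $\varepsilon>0$, so condition~(A) holds provided $a_n$ grows faster than $n^{1/2}(\log n)^{3/2}$. Condition~(B) is comparatively painless: with $v=2$ and $\sup_n E(U_n^4)<\infty$ it reduces to $\sum_n a_n^{-2}<\infty$, so again $a_n\gg n^{1/2+\varepsilon}$ suffices.

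It then remains to choose $a_n$ meeting all requirements of Theorem~\ref{Cuny_Merlevede}: $a_n$ non-decreasing, $a_n/\sigma_n^2$ non-increasing, $a_n/\sigma_n$ non-decreasing, $a_n$ large enough for (A) and (B), and small enough that the resulting error $o\big((a_n(|\log(\sigma_n^2/a_n)|+\log\log a_n))^{1/2}\big)$ is $o(\sigma_n^{1-\beta})$. One also uses the a priori upper bound $\sigma_n^2\le Cn$, which holds because $\sup_n\|\phi_n\|_\alpha<\infty$ and correlations decay exponentially, together with the hypothesis $\sigma_n^2\ge C_2 n^\delta$. Balancing the lower bound $a_n\gg n^{1/2}$ forced by (A) and (B) against the requirement $\sqrt{a_n}=o(\sigma_n^{1-\beta})$ (modulo logarithmic and $n^{\varepsilon}$ factors), subject to the monotonicity constraints tying $a_n$ to $\sigma_n$ and to $n$, is what produces the admissible range $\delta>\tfrac{\sqrt{17}-1}4$ and $\beta<\tfrac{\sqrt{17}-1}{4\delta}$. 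Applying Theorem~\ref{Cuny_Merlevede} with such an $a_n$ furnishes the independent centered Gaussians $(Z_k)$ with $E(Z_k^2)=E(U_k^2)$, and transferring back through the coboundary identity of the first step --- whose error $h_1\circ T-h_{n+1}\circ T^{\,n+1}$ is $O(1)=o(\sigma_n^{1-\beta})$ --- completes the proof.

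The step I expect to be the real obstacle is condition~(A). Unlike in the bounded--variation setting, where \cite[Theorem~4.1]{CR} delivers a quadratic--form bound of the sharp order $\sum_{k\le n}E(U_k^2)\asymp\sigma_n^2$, the estimate available here is only $O(n)$, and since $\sigma_n^2$ may be as small as $n^\delta$ with $\delta<1$, recovering the almost sure rate $o(a_n)$ through Gal--Koksma while simultaneously respecting the monotonicity constraints of Theorem~\ref{Cuny_Merlevede} and the moment condition~(B) is precisely what forces the (non-optimal, cf.\ the authors' own remark) restriction $\delta>\tfrac{\sqrt{17}-1}4$. A secondary technical point is the regularity needed to make $\psi_j^2$ uniformly $\alpha$--H\"older so that decay of correlations applies to $P^{m}(\psi_j^2-\int\psi_j^2\,d\mu)$, which in turn underlies the off-diagonal control in the quadratic--form bound.
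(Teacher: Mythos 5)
Your overall architecture --- the Gordin/Conze--Raugi decomposition $\psi_n=\phi_n+h_n-h_{n+1}\circ T$, the reverse martingale differences $U_n=\psi_n\circ T^n$, condition~(B) with $v=2$ and $a_n=\sigma_n^{2\eta}$, and the bookkeeping $\sum_{j\le n}E(U_j^2)=\sigma_n^2+O(\sigma_n)$ --- coincides with the paper's. Where you genuinely diverge is condition~(A). The paper obtains the same second-moment bound $\int(\sum_{k\le n}g_k)^2\,d\mu=O(n)$ for the centered conditional variances $g_k=(P\psi_k^2-\mu(\psi_k^2))\circ T^{k+1}$, but explicitly declines to invoke Gal--Koksma (because, unlike the BV case, the bound is $O(n)$ rather than $O(\sigma_n^2)$); instead it runs a bare Chebyshev estimate along the subsequence $f(n)=[n^\omega]$ and fills the gaps by bounding the at most $O(n^{\omega-1})$ omitted terms in sup norm, and it is the optimization of $\omega$ against $\eta$ that produces the quadratic $2\gamma_0^2+\gamma_0-2=0$ and hence the threshold $\gamma_0=(\sqrt{17}-1)/4$. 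You instead feed the block bound $\int(\sum_{m<k\le m+n}g_k)^2\,d\mu\le Cn$ --- which does hold uniformly in the starting index, since the $P\psi_k^2-\mu(\psi_k^2)$ are uniformly H\"older with zero mean --- directly into Gal--Koksma with $\Phi(n)=n$, getting $S_n=o(n^{1/2}\log^{3/2+\varepsilon}n)$ a.s. This is legitimate and in fact strictly more efficient: the dyadic maximal inequality inside Gal--Koksma controls the gaps in $L^2$ rather than in sup norm, so both conditions~(A) and~(B) reduce to $\eta>1/(2\delta)$, and your route yields the ASIP for all $\delta>1/2$ with error $o(\sigma_n^{1-\beta})$ for every $\beta<1-\frac{1}{2\delta}$, which improves the paper's (self-admittedly non-optimal) result.

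One correction, though: your closing claim that the balancing of $a_n$ against the monotonicity constraints ``is what produces the admissible range $\delta>\frac{\sqrt{17}-1}{4}$ and $\beta<\frac{\sqrt{17}-1}{4\delta}$'' is not accurate for your argument --- that constant is an artifact of the paper's cruder gap-filling, and your optimization yields $\tfrac12$ in place of $\gamma_0$. You should carry the computation through and state the (better) constants you actually get, rather than asserting the paper's. Note also that the paper's proof concludes with $\beta<1-\frac{\gamma_0}{\delta}$ while the theorem as stated says $\beta<\frac{\gamma_0}{\delta}$ (apparently a typo); your range $\beta<1-\frac{1}{2\delta}$ contains the former for all admissible $\delta$ but not the latter near $\delta=\gamma_0$, so what you prove is a strengthening of what the paper's proof actually establishes.
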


\begin{proof}
Define $h_n=P\phi_{n-1}+P^{2}\phi_{n-2}+\cdots+P^n\phi_0$
and put
\[
\psi_n=\phi_n +h_n-h_{n+1}\circ T.
\]
Note $P\psi_n=0$ and that $\|h_n\|=\mathcal{O}(1)$ for $n>1$ by the same argument as in the proof 
of Theorem~\ref{ASIP.sequential}.
The sequence $U_n=\psi_n \circ T^n$ is a sequence of reversed martingale differences with respect to the filtration $\mathcal{F}_n$,
where $\mathcal{F}_n=T^{-n}\mathcal{F}_0$. 
We will take $a_n=\sigma_n^{2\eta}$ where $\eta>0$
will be determined below.
Since $ \|\psi_j\|_{\alpha}=\mathcal{O}(1)$ 
and consequently $ \|U_j\|_{\alpha}=\mathcal{O}(1)$ we conclude that
$$
\sum_n\frac{\mu(U_n^4)}{a_n^2}\le c_1\sum_n\frac1{\sigma_n^{4\eta}}
\le c_2\sum_n\frac1{n^{2\eta\delta}}
<\infty
$$
provided $\eta>\frac1{2\delta}$. In this case Condition~(B) of  Theorem~\ref{Cuny_Merlevede} is satisfied
for $v=2$. 

In order to verify  Condition~(A) of Theorem~\ref{Cuny_Merlevede} let us observe that
  $E[U_j^2|\mathcal{F}_{j+1}]=E[\psi_j^2\circ T^j |\mathcal{F}_{j+1}]= P^{j+1}(\psi_j\circ T^{j})\circ T^{j+1}=(P^{j+1}U^{j}\psi_j^2)\circ T^{j+1}=(P\psi_j^2)\circ T^{+1}$. We now shall prove a strong law of large 
  numbers with rate for the sequence $E[U_j^2|\mathcal{F}_{j+1}]$. For simplicity of notation we denote  $E[U_j^2|\mathcal{F}_{j+1}]$ by $\hat{U_j}^2$.

Let us write $S_n= \sum_{j=1}^n [\hat{U_j}^2 -\mu (U^2_j)]$ for the LHS of condition~(A) in 
Theorem~\ref{Cuny_Merlevede}.
Then $\rho_n^2=\int S_n^2\,d\mu=\int (\sum_{j=1}^n \hat{U_j}^2 -E[U_j^2])^2 \,d\mu$
satisfies by decay of correlations the estimate $\rho_n^2=\mathcal{O}(n)$,
where we used that $ \|\hat{U_j}^2\|_{\alpha}=\mathcal{O}(1)$.
Hence by Chebyshev's inequality
$$
P\!\left(|S_n|> \frac{\sigma_n^{2\eta}}{\log n}\right) \le \frac{\rho_n^2}{\sigma_n^{4\eta}}\log^2 n
\le c_3n^{-(2\eta\delta-1)}\log^{2}n
$$
as $\sigma_n^2=\mathcal{O}(n^\delta)$.
Since $\delta$ is never larger than $2$, we have  $2\eta\delta-1\le1$. Then along a subsequence $f(n)=[n^\omega]$ for $\omega>\omega_0=\frac1{2\eta\delta-1}\ge1$ we can apply the 
Borel-Cantelli lemma since
 $P\!\left(|S_{f(n)}|>\sigma_{f(n)}^{2\eta}/\log f(n)\right)$ is summable as 
 $\sum_n n^{-\omega(2\eta\delta-1)}\log^{2} n<\infty$. 
Hence by Borel-Cantelli for $\mu$ a.e. $x \in X$, $|S_{f(n)}(x)|>\frac{\sigma_{f(n)}^{2\eta}}{\log f(n)}$ only finitely often.

In order to control the gaps note that  $[(n+1)^\omega] -[n^\omega]=\mathcal{O}(n^{\omega-1})$
and let $k\in(f(n),f(n+1))$.
Since along the subsequence $S_{f(n)} =o(\sigma_{f(n)}^{2\eta})$ we conclude that
$S_k=o(\sigma^{2\eta}_{f(n)} )  +\mathcal{O}(n^{\omega-1})$ as there are at most 
$n^{\omega-1}$ terms  $\hat{U_j}^2-E[U_j^2]=\mathcal{O}(1)$ in the range $j\in(f(n),k]$.
 
Choosing $\omega>\omega_0 $ close enough to $\omega_0$ we conclude that
 $$
 S_k=o\!\left(\sigma_{f(n)}^{2\eta}+n^{\omega-1}\right)
 =o\!\left(\sigma_n^{2\eta}+\sigma_n^{(\omega-1)\frac2\delta}\right)
 =o\!\left(\sigma_k^{2\eta}\right),
 $$
 for $\eta>\eta_0$ where 
 $\eta_0$ satisfies $2\eta_0=(\omega_0-1)\frac2\delta=\frac{2-2\eta\delta}{2\eta\delta-1}\frac2\delta$ 
 which implies
$ \eta_0=\frac{\gamma_0}{\delta}$,  with $\gamma_0=\frac{\sqrt{17}-1}4$.

This concludes the proof of Condition~(A) with $a_n=\sigma_n^{2\eta}$. 
Also note that $\eta_0$ is larger than $\frac1{2\delta}$ which ensures Condition~(B). 
Thus $\{U_j\}$ satisfies  the ASIP
with error $\mathcal{O}(\sigma_n^{1-\beta})$ for $0<\beta<\beta_0= 1-\eta_0=1-\frac{\gamma_0}\delta$
and hence so does $\{\phi_j\circ T^j\}$. In particular we must require $\delta$ to be bigger than
$\gamma_0$ (which is slightly larger than $\frac34$).
\end{proof}



We now state a  corollary of this theorem for a sequence of  non-stationary observations on  
 Axiom A dynamical systems. 

\begin{cor}\label{axiomAmain}
Suppose $(T,X,\mu)$ is an Axiom-A dynamical system, where $\mu$ is a Gibbs measure. Suppose $\{\phi_j\}$ is a sequence of $\alpha$-H\"{o}lder functions such that $\int \phi_j\, d\mu=0$ and $\sup_j \|\phi_j\|_{\alpha} <\infty$ for some constant $C$. 
Let $\sigma_n^2=\int (\sum_{j=1}^n \phi_j \circ T^n)^2d\mu$ and suppose that $\sigma_n^2 \ge Cn^{\delta}$ for some $\delta>\frac{\sqrt{17}-1}4$ and a constant $C<\infty$.  Then there is a sequence of centered  independent Gaussian random variables
$(Z_j)$ and a $\gamma >0$ such that, enlarging our probability space if necessary, 
\[
\sum_{j=1}^n \phi_j \circ T^j =\sum_{j=1}^n Z_j +O(\sigma_n^{1-\beta})
\]
$\mu$ almost surely for any $\beta<\frac{\sqrt{17}-1}{4\delta}$.

Furthermore $ \sum_{i=1}^n E[Z_i^2]=\sigma_n^2+O(\sigma_n)$.
\end{cor}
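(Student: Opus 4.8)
The plan is to reduce Corollary~\ref{axiomAmain} to Theorem~\ref{NSHYP} by the standard device of coding an Axiom~A system onto its one-sided subshift. First I would recall that for an Axiom~A diffeomorphism restricted to a basic set, with $\mu$ a Gibbs measure for the H\"older potential, Bowen's symbolic dynamics gives a two-sided subshift of finite type $(\Sigma,\sigma)$ that is a finite-to-one extension of $(T,X,\mu)$, with the Gibbs measure pulling back to a Gibbs measure on $\Sigma$. The point is that Theorem~\ref{NSHYP} is stated for a \emph{non-invertible} system whose transfer operator contracts the H\"older norm on mean-zero functions, so I need to pass from the invertible model to a one-sided one.

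The key step is the classical Sinai trick: given the sequence of $\alpha$-H\"older functions $\phi_j$ on $X$, lift them to $\Sigma$ and then cohomologically replace each $\phi_j$ by a function depending only on future coordinates. Concretely, for a single H\"older observable on a two-sided shift one finds $u$ H\"older with $\phi = \bar\phi + u - u\circ\sigma$ where $\bar\phi$ depends only on the nonnegative coordinates; the H\"older norm of $\bar\phi$ and of $u$ is bounded by a fixed multiple of the H\"older norm of $\phi$ (with a possibly smaller H\"older exponent $\alpha'$). Applying this uniformly to the sequence $\{\phi_j\}$ — the constants are uniform because $\sup_j\|\phi_j\|_\alpha<\infty$ — yields $\bar\phi_j$ on the one-sided shift $\Sigma^+$ with $\sup_j\|\bar\phi_j\|_{\alpha'}<\infty$ and $\int\bar\phi_j\,d\mu^+=0$. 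The telescoping coboundary terms $u_j\circ\sigma^j - u_{j+1}\circ\sigma^{j+1}$ do not quite telescope because the $u_j$ vary with $j$, but since $\sup_j\|u_j\|_{\alpha}<\infty$ one has $\sum_{j=1}^n \phi_j\circ\sigma^j = \sum_{j=1}^n \bar\phi_j\circ\sigma^j + u_1\circ\sigma - u_{n+1}\circ\sigma^{n+1} + \sum_{j=1}^{n}(u_j - u_{j+1})\circ\sigma^j$; wait — more carefully, $\sum_{j} (u_j\circ\sigma^j - u_j\circ\sigma^{j+1})$, and the extra mismatch $\sum_j (u_{j+1}-u_j)\circ\sigma^{j+1}$ must be absorbed. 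The cleanest route is instead to note that the one-sided transfer operator $P^+$ for $\mu^+$ contracts $\|\cdot\|_{\alpha'}$ on mean-zero H\"older functions (quasi-compactness of the Ruelle operator for a subshift of finite type with Gibbs measure), so $(\Sigma^+,\sigma,\mu^+)$ together with $\{\bar\phi_j\}$ satisfies \emph{all} hypotheses of Theorem~\ref{NSHYP}, with the same $\delta$ since $\int(\sum_{j=1}^n\bar\phi_j\circ\sigma^j)^2\,d\mu^+ = \sigma_n^2 + O(\sigma_n)$ (the coboundary part changes the variance by $O(\sigma_n)$ because the $u_j$ are uniformly bounded, exactly as in the proof of Theorem~\ref{Main_BV}).

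Having applied Theorem~\ref{NSHYP} on $\Sigma^+$, I obtain independent centered Gaussians $(Z_j)$ with $\sum_{j=1}^n\bar\phi_j\circ\sigma^j = \sum_{j=1}^n Z_j + O(\sigma_n^{1-\beta})$ for $\beta<\tfrac{\sqrt{17}-1}{4\delta}$ and $\sum_i E[Z_i^2]=\sigma_n^2+O(\sigma_n)$. Then I pull this back through the semiconjugacy: the factor map $\pi:\Sigma\to X$ with $\pi\circ\sigma = T\circ\pi$ and $\pi_*(\text{Gibbs on }\Sigma) = \mu$ transports the almost-sure statement on $\Sigma$ (hence on $\Sigma^+$ after the standard reduction from two-sided to one-sided, which only costs another uniformly bounded coboundary) to an almost-sure statement on $X$, and the uniform boundedness of the $u_j$ converts the bound for $\sum\bar\phi_j\circ\sigma^j$ into the bound for $\sum\phi_j\circ T^j$ with the same error exponent.

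The main obstacle I expect is bookkeeping the non-stationarity through the Sinai/Bowen reduction: in the classical stationary case the coboundary telescopes to a single bounded term, but here each $\phi_j$ gives its own transfer-function $u_j$, so I must check that $\|\sum_{j=1}^n (u_j - u_{j+1})\circ\sigma^{j+1}\| $ — or whatever residual appears — is genuinely $O(\sigma_n)$ (in fact $o(\sigma_n^{1-\beta})$ suffices) rather than growing like $n$; this is fine because each difference is $O(1)$ in sup norm but one must organize the sum as a telescoping-plus-error and not naively bound it termwise. A secondary, purely technical point is that the H\"older exponent degrades from $\alpha$ to some $\alpha'$ under the symbolic coding and the Sinai trick, but since Theorem~\ref{NSHYP} only requires \emph{some} H\"older exponent with a uniform bound, this is harmless.
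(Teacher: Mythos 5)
Your overall strategy---code by a Markov partition, reduce to the one-sided shift via the Sinai trick, apply Theorem~\ref{NSHYP}, and pull back through the semiconjugacy---is exactly the route the paper takes, and your remarks about the harmless degradation of the H\"older exponent are correct. But there is a genuine gap at precisely the step you flag and then defer. Applying the \emph{stationary} Sinai trick to each $\phi_j$ separately gives $\phi_j=\bar\phi_j+u_j-u_j\circ\sigma$ with the \emph{same} transfer function $u_j$ on both sides, so
\[
\sum_{j=1}^n\phi_j\circ\sigma^j-\sum_{j=1}^n\bar\phi_j\circ\sigma^j
=\sum_{j=1}^n\bigl(u_j\circ\sigma^j-u_j\circ\sigma^{j+1}\bigr)
\]
does not telescope, and there is no reason for this residual to be $O(\sigma_n)$, let alone $o(\sigma_n^{1-\beta})$: the differences $u_{j+1}-u_j$ need not be small in any sense, since the $\phi_j$ form an arbitrary uniformly H\"older sequence, so a priori the residual is only $O(n)$, which overwhelms $\sigma_n^{1-\beta}$ whenever $\delta<2$. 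Your claim that ``the coboundary part changes the variance by $O(\sigma_n)$ because the $u_j$ are uniformly bounded, exactly as in the proof of Theorem~\ref{Main_BV}'' implicitly assumes the telescoping that fails here; in Theorem~\ref{Main_BV} the boundary contribution is literally two terms, $h_1(\TF_1)-h_{n+1}(\TF_{n+1})$, not a sum of $n$ of them. Saying ``one must organize the sum as a telescoping-plus-error'' names the problem without solving it.

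The missing idea is that the transfer function must be built from the whole tail of the sequence rather than observable by observable. The paper picks a H\"older map $G$ depending only on future coordinates (e.g.\ substituting the past coordinates by a fixed string) and sets
\[
v_n(x)=\sum_{k\ge n}\bigl(\phi_k(\sigma^{k-n}x)-\phi_k(\sigma^{k-n}Gx)\bigr),
\]
which converges geometrically and is uniformly bounded in H\"older norm by the uniform bound on $\|\phi_k\|_\alpha$. The resulting identity is $\phi_n=\psi_n+v_n-v_{n+1}\circ\sigma$, with the index shift $n\mapsto n+1$ built into the decomposition and with $\psi_n=\phi_n-v_n+v_{n+1}\circ\sigma$ depending only on future coordinates. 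With this, $\sum_{j}\phi_j\circ\sigma^j=\sum_j\psi_j\circ\sigma^j+\bigl[v_0-v_{n+1}\circ\sigma^{n+1}\bigr]$ telescopes exactly to two uniformly bounded terms, after which the remainder of your argument (one-sided transfer operator, Theorem~\ref{NSHYP}, pull-back) goes through as you describe. Without this modified, sequence-adapted construction the reduction does not close.
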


\begin{proof}
The  assumption $\sigma_n^2  \ge C n^{\delta}$ for some $\delta>\frac{\sqrt{17}-1}4$ agrees with 
Theorem~\ref{NSHYP}.
The basic strategy is now  the standard technique of coding  first by a two sided shift and then reducing to a non-invertible  one-sided shift. There is a good description  in Field, Melbourne and T\"or\"ok~\cite{FMT}.
We use a Markov partition to code $(T,X,\mu)$ by a 2-sided shift $(\sigma,\Omega,\nu)$ in a standard way~\cite{Bowen,Parry_Pollicott}.  We lift $\phi_j$
to the system $(\sigma,\Omega,\nu)$ keeping the same notation for $\phi_j$ for simplicity. Using  the Sinai trick~\cite[Appendix A]{FMT}
we may write
\[
\phi_j=\psi_j+ v_j-v_{j+1}\circ \sigma
\]
where $\psi_j$ depends only on future coordinates and is H\"older of exponent $\sqrt{\alpha}$ if $\phi_j$ is of exponent $\alpha$.  In fact
$\|\psi_j\|_{\sqrt{\alpha}}\le K $ and similarly $\|v_j\|_{\sqrt{\alpha}}\le K $ for a  uniform constant $K$.

There is a slight difference in this setting to the usual construction. Pick a H\"older map $G:X\to X$ that depends only on future coordinates (e.g.\ a map which locally substitutes all negative coordinates by a fixed string) and define
\[
v_n(x)=\sum_{k\ge n} \phi_k (\sigma^{k-n}x)-\phi_k (\sigma^{k-n} Gx).
\]
It is easy to see that the sum converges  since $|\phi_k (\sigma^{k-n}x)-\phi_k (\sigma^{k-n} Gx)|\le C\lambda^k \|\phi_k\|_{\alpha}$ (where $0<\lambda<1$)
 and that 
$\|v_n\|_{\alpha}\le C_2$ for some uniform $C_2$.

Since
\[
\phi_n -v_n+v_{n+1}\circ \sigma =\phi_n (Gx)+ \sum_{k>n}[\phi_{k} (\sigma^{k-n} Gx)- \phi_{k} (\sigma^{k-n} G\sigma x) ]
\]
defining $\psi_n=\phi_n -v_n+v_{n+1}\circ \sigma $ we see $\psi_n$ depends only on future coordinates.

We let $\mathcal{F}_0$ denote the $\sigma$-algebra
consisting of events which depend on past coordinates. This is equivalent to conditioning on local stable manifolds defined by the Markov partition.  Symbolically
$\mathcal{F}_0$ sets are of the form $(****.\omega_0 \omega_1 \ldots)$ where $*$ is allowed to be any symbol.

Finally using the transfer operator  $P$ associated to the one-sided shift $\sigma (x_0x_1\ldots x_n \ldots )=(x_1x_2\ldots x_n \ldots )$ we are in the set-up of Theorem~\ref{NSHYP}.
As before we define $h_n=P\psi_{n-1}+P^{2}\psi_{n-2}+\cdots+P^n\psi_0$
and put
\[
V_n=\psi_n +h_n-h_{n+1}\circ T
\]
The sequence $U_n=V_n \circ T^n$ is a sequence of reversed martingale differences with respect to the filtration $\mathcal{F}_n$,
where $\mathcal{F}_n=\sigma^{-n}\mathcal{F}_0$. In fact $(UP)f=E[f|\sigma^{-1}\mathcal{F}_0]\circ \sigma$ while $(PU) f=f$ (this is easily checked, see~\cite[Remark 3.1.2]{FMT}  or~\cite{Parry_Pollicott}).

 Thus $U_n$ satisfies the ASIP with error term $o(\sigma_n^{1-\beta})$ for  
 $\beta\in(0,1-\frac{\gamma_0}\delta)$. Hence $\psi_n\circ T^n $ satisfies the ASIP
 with error term $o(\sigma_n^{1-\beta})$.

  Finally
  \[
  \sum_{j=0}^n \phi_j =\sum_{j=0}^n  \psi_j (T^j) + [v_0-v_n\circ \sigma^{n+1}]
  \]
  as the sum telescopes.
  As $ |v_n|\le C$  we have the  ASIP with error term $o(\sigma_n^{1-\beta})$ for  the sequence $\{\phi_n\circ T^n\}$.   This concludes the proof.
  \end{proof}




\section{Improvements of  earlier work.}

We collect here examples for which a self-norming CLT was already proven,
but actually a (self-norming) ASIP holds if the variance grows at the rate
required by Theorem~\ref{Main_BV}.

Conze and Raugi~\cite[Remark 5.2]{CR} show that for sequential systems
formed by taking maps near a given $\beta$-transformation with $\beta>1$,
by which we mean maps $T_{\beta{'}}$ with $\beta{' }\in (\beta -\delta,
\beta +\delta)$ for sufficiently small $\delta>0$, the conditions~(DFLY)
and~(LB) are satisfied and if $\phi$ is not a coboundary
for $T_{\beta}$ then the variance for $\phi\in BV$ grows as $\sqrt{n}$.

N\'andori, Sz\'asz and Varj\'u~\cite[Theorem 1]{NSV} give conditions under
which sequential systems satisfy a self-norming CLT. These conditions
include~(DFLY) and~(LB) (the maps all preserve a fixed measure
$\mu$, so one can use the transfer operator with respect to $\mu$), and their
main condition gives the rate of growth for the variance (see~\cite[page
1220]{NSV}). If this rate satisfies the requirement of
Theorem~\ref{Main_BV}, then for such systems the ASIP holds as well. Such
cases follow from their Examples 1 and 2, where the maps are selected from
the family $T_a(x)=ax (\operatorname{mod} 1)$, $a\ge 2$ integer, and
Lebesgue as the invariant measure.
Note however that their Example 2 includes sequential systems whose
variance growth slower than any power of $n$, but still satisfy the
self-norming CLT.


%

\section{Further applications.}


%
We consider here maps for which conditions~(DFLY) and~(LB)
are satisfied, but in order to guarantee the unboundedness of the variance
when $\phi$ is not a coboundary, we need to introduce new assumptions; we
follow here again \cite{CR}, especially Sect. 5. First of all, all the maps
in $\cal{F}$ will be close, in a sense we will describe below, to a given
map $T_0$. Call $P_0$ the transfer operator associated to $T_0.$
Then one considers  the following distance between two operators $P$ and $Q$
acting
on $BV$:
$$
d(P,Q)= \sup_{f\in BV,\; \|f\|_{BV}\le 1}||P f-Q f||_1.
$$ By
induction and the Doeblin-Fortet-Lasota-Yorke inequality for compositions we immediately
have
\begin{equation}\label{VC}
\textbf{(DS)\qquad}  d(P_r \circ \cdots \circ P_1, P_0^r)\le M \sum_{j=1}^r d(P_j, P_0),
\end{equation}
with $M=1+A\rho^{-1}+B.$

{\bf Exactness property}: The operator $P_0$ has a spectral gap, which
implies that there are two constants $C_1<\infty$ and $\gamma_0\in(0,1)$
so that
$$
\textbf{(Exa)\qquad} ||P_0^n f||_{BV}\le C_1 \gamma_0^n||f||_{BV}
$$
for all $f\in BV$ of zero (Lebesgue) mean and $n\ge 1$.

According to~\cite[Lemma~2.13]{CR}, (DS) and~(Exa) imply that there exists a constant
$C_2$ such that
$$
\|P_n\circ\cdots\circ P_1\phi-P_0^n\phi\|_{1}\le C_2\|\phi\|_{BV}(\sum_{k=1}^pd(P_{n-k+1},P_0)+(1-\gamma_0)^{-1}\gamma_0^p)
$$
 for all
integers $p\le n$ and all functions $\phi$ of bounded variation.

{\bf  Lipschitz continuity property}: Assume that the maps (and their transfer
operators) are parametrized by a sequence of numbers $\eps_k$, $k\in
\mathbb{N}$, such that $\lim_{k\to\infty}\eps_k=\eps_0$, ($P_{\eps_0}=P_0$).
We assume that there exists a constant $C_3$ so that
$$
\textbf{(Lip)\qquad} d(P_{\eps_k},P_{\eps_j})\le C_3|\eps_k-\eps_j|,\qquad \text{ for all $k, j\ge 0$}.
$$

{\bf Convergence property}: We require algebraic convergence of the
parameters, that is, there exist a constant $C_4$ and $\kappa>0$ so that
$$
\textbf{ (Conv)\qquad} |\eps_n-\eps_0|\le \frac{C_4}{n^{\kappa}}\qquad \forall n\ge1.
$$

With this last assumption and~(Lip), we get a polynomial decay for (\ref{VC}) of the
type $O(n^{-\kappa})$ and in particular we obtain the same algebraic
convergence in $\mathscr{L}^1$ of $P_n\circ\cdots\circ P_1\phi$ to $h \int \phi \,dm$,
where $h$ is the density of the absolutely continuous mixing measure of the
map $T_0.$ This convergence is necessary to establish the growth of the
variance $\sigma^2_n$.

Finally, we also require

{\bf Positivity property}: The density $h$ for the limiting map $T_0$ is strictly positive, namely
\[
\textbf{(Pos)\qquad} \inf_{x}h(x)>0.
\]

The relevance of these four properties is summarised by the following result:

\begin{Lemma}\cite[Lemma~5.7]{CR}\label{thm.xtra-conditions}
  Assume the assumptions~(Exa), (Lip), (Conv) and~(Pos) are satisfied.
 If $\phi$ is not a coboundary for $T_0$ then $\sigma^2_n/n$
  converges as $n\to\infty$ to $\sigma^2$ which moreover is given by
  $$
  \sigma^2= \int \hat{P}[G\phi-\hat{P}G\phi]^2(x) h(x) \ dx,
  $$
  where $\hat{P}\phi=\frac{P_0(h\phi)}{h}$ is the normalized transfer operator
  of $T_0$ and $G\phi=\sum_{k\ge0}\frac{P_0^k(h\phi)}{h}.$
\end{Lemma}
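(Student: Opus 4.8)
The plan is to establish the existence of the limit $\sigma^2_n/n\to\sigma^2$ by comparing the sequential Birkhoff sums to those of the single map $T_0$, for which the CLT variance is known from the classical Green–Kubo formula. First I would recall from Lemma~\ref{thm.xtra-conditions}'s hypotheses that (Exa) gives a spectral gap for $P_0$, so that the limiting density $h$ with $P_0 h = h$ exists, is of bounded variation, and — by (Pos) — is bounded away from zero. Then (DS), (Lip) and (Conv) together with \cite[Lemma~2.13]{CR} give the quantitative estimate
\[
\|P_n\circ\cdots\circ P_1\phi - P_0^n\phi\|_1 \le C\,\|\phi\|_{BV}\bigl(\textstyle\sum_{k=1}^p d(P_{n-k+1},P_0) + (1-\gamma_0)^{-1}\gamma_0^p\bigr),
\]
and since $d(P_{n-k+1},P_0) = O((n-k+1)^{-\kappa})$, optimizing over $p\le n$ (e.g.\ $p\sim\log n$) yields that $\|P_n\circ\cdots\circ P_1\phi - P_0^n\phi\|_1\to 0$, and in fact $P_n\circ\cdots\circ P_1\phi \to h\int\phi\,dm$ in $\mathscr{L}^1$ at an algebraic rate controlled by $\kappa$.

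Next I would write out $\sigma^2_n = \int\bigl(\sum_{j=1}^n\tilde\phi_j(\TF_j)\bigr)^2 dm$ as a double sum $\sum_{j,l}\int \tilde\phi_j(\TF_j)\tilde\phi_l(\TF_l)\,dm$ and use the duality relation \eqref{c} to rewrite each cross term, for $j\le l$, as $\int \tilde\phi_j\cdot\bigl(P_j^{*}\text{-type pushforward}\bigr)$ — concretely, $\int (\P_l 1)\,\tilde\phi_j\,(\text{something})$ reduces, via the normalization by $\P_j 1$, to an expression involving $\hat P_0^{\,l-j}$ plus an error. The key point is that each correlation $\int \tilde\phi_j(\TF_j)\tilde\phi_l(\TF_l)\,dm$ differs from the corresponding stationary correlation $\int \hat P_0[\,G_0\text{-truncated}\,]$ term for the map $T_0$ by an amount summable in $|l-j|$ (geometrically, from the spectral gap) and vanishing in $\min(j,l)$ (algebraically, from the convergence $\P_n\phi\to h\int\phi$ just established). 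Summing, the diagonal block of width $O(1)$ in $|l-j|$ contributes $n$ copies of the single-map Green–Kubo sum $\sum_{k\in\Z}\text{(correlation at lag }k)$, which by the standard identity equals $\int \hat P[G\phi - \hat P G\phi]^2\,h\,dx$ with $G\phi=\sum_{k\ge0}\hat P^k\phi$, while all remaining terms are $o(n)$. Dividing by $n$ and letting $n\to\infty$ gives the claimed formula.

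The main obstacle is the bookkeeping in the second step: one must show that the non-stationarity — i.e.\ the dependence of the early maps $T_1,\dots,T_j$ on their indices — contributes only $o(n)$ to the sum of correlations, uniformly enough that the Cesàro limit survives. This requires carefully splitting the double sum into (i) lags $|l-j|$ large, where the spectral gap of $P_0$ combined with the (DFLY)-type bound \eqref{LY2} forces geometric decay regardless of which maps appear, and (ii) small lags but small $\min(j,l)$, a set of pairs of size $o(n)$, and (iii) small lags with large $\min(j,l)$, where one replaces the actual operators by $P_0$ at the cost of the algebraic error from (Conv). The positivity (Pos) enters to control the normalizing factors $1/\P_j 1$ uniformly, since $\P_j 1\to h$ and $\inf h>0$ ensures $\inf_x \P_j 1(x)$ stays bounded below for large $j$ (and (LB) handles the small-$j$ range). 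The identification of the resulting constant with $\int \hat P[G\phi-\hat P G\phi]^2 h\,dx$ is then the classical computation for the map $T_0$, which I would invoke rather than redo; the hypothesis that $\phi$ is not a coboundary for $T_0$ guarantees this constant is strictly positive, so the variance genuinely grows like $n$.
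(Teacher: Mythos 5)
The paper offers no proof of this lemma: it is quoted directly from Conze--Raugi \cite[Lemma~5.7]{CR}, and your outline --- using (DS), (Lip), (Conv) and \cite[Lemma~2.13]{CR} to get the algebraic $\mathscr{L}^1$-convergence $\P_n\phi\to h\int\phi\,dm$, expanding $\sigma_n^2$ as a double sum of correlations, splitting by the size of the lag $|l-j|$ and of $\min(j,l)$ so that only the near-diagonal, large-index block survives the Ces\`aro limit, and identifying that limit with the Green--Kubo expression $\int \hat P[G\phi-\hat P G\phi]^2 h\,dx$ for $T_0$ --- is essentially the argument given there. The strategy is sound, including your correct identification of the delicate point (uniform geometric decay of the off-diagonal correlations via the spectral gap of $P_0$ perturbed through (DS), with (LB)/(Pos) controlling the normalizations $1/\P_j 1$), so this matches the cited proof.
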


\subsection{$\beta$ transformations}
Let $\beta>1$ and denote by $T_{\beta}(x)=\beta x$ mod $1$ the
$\beta$-transformation on the unit circle. Similarly for $\beta_k\ge 1+c > 1$, $ k=1,2,\dots$,
we have the transformations $T_{\beta_k}$ of the same kind, $x\mapsto \beta_kx$ mod $1$.
Then ${\cal F}=\{T_{\beta_k}:k\}$ is the family of functions we want to consider here.
The property~(DFLY) was proved in~\cite[Theorem 3.4~(c)]{CR}
and condition~(LB) in \cite[Proposition 4.3]{CR}.
Namely, for any $\beta > 1$ there exist $a>0, \delta>0$ such that whenever
$\beta_k\in[\beta-a, \beta+a],$ then $P_k\circ \cdots\circ P_1\ 1(x)\ge
\delta,$ where $P_\ell$ is the transfer operator of $T_{\beta_\ell}.$
%
The invariant density of $T_{\beta}$ is bounded below, and continuity~(Lip) is
precisely the content of Sect.~5 in \cite{CR}. We therefore obtain
(see~\cite[Corollary 5.4]{CR}):

\begin{Theorem}
  Assume that $|\beta_n-\beta|\le n^{-\theta}$, $\theta > 1/2$. Let $\phi\in BV$
  be such that $m(hf)=0$, where $m$  is the Lebesgue measure
   and $\phi$ is not a coboundary for $T_\beta$, so $\sigma^2\not=0$. Then
   the random variables
\[
W_n=\phi+ T_{\beta_1} \phi + \dots + T_{\beta_1} T_{\beta_2} \dots T_{\beta_{n-1}}\phi
\]
satisfy a standard ASIP with variance $\sigma^2$.
\end{Theorem}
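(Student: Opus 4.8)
The plan is to deduce this theorem directly from Theorem~\ref{Main_BV} together with Lemma~\ref{thm.xtra-conditions}, by verifying that the $\beta$-transformation family satisfies all the required hypotheses. The chain of reasoning has three links: (i) the structural hypotheses~(DFLY) and~(LB) hold for the family $\{T_{\beta_k}\}$; (ii) the variance $\sigma_n^2$ grows at the rate $\sigma_n^2 \sim \sigma^2 n$, which in particular gives $\sigma_n \ge n^{1/4+\delta}$ for any $\delta<1/4$ once $n$ is large; and (iii) therefore Theorem~\ref{Main_BV} applies and delivers the ASIP with error $o(\sigma_n^{1-\beta})$ for any $\beta<\delta$, which, since $\sigma_n^2\sim\sigma^2 n$, is exactly the assertion of a standard ASIP with variance $\sigma^2$ in the sense defined in the Introduction.

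First I would invoke the cited results of Conze and Raugi: (DFLY) is~\cite[Theorem 3.4(c)]{CR} and~(LB) is~\cite[Proposition 4.3]{CR}, valid on a neighborhood $[\beta-a,\beta+a]$ of $\beta$; since $|\beta_n-\beta|\le n^{-\theta}\to 0$, all but finitely many $\beta_n$ lie in this neighborhood, and the finitely many exceptional maps can be absorbed without affecting the asymptotic statements (or one simply shrinks $a$ — in any case the tail is what matters). Next I would check the hypotheses of Lemma~\ref{thm.xtra-conditions}: the exactness/spectral-gap property~(Exa) for $P_0=P_\beta$ is classical for $\beta$-transformations; the Lipschitz property~(Lip) of the operator family in the $d(\cdot,\cdot)$ metric is the content of Section~5 of~\cite{CR}; the convergence property~(Conv) holds with $\kappa=\theta>1/2$ by hypothesis; and~(Pos), strict positivity of the invariant density $h$ of $T_\beta$, is the standard Rényi–Parry bound $h\ge (1-1/\beta)$. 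Since $\phi$ is assumed not a coboundary for $T_\beta$, Lemma~\ref{thm.xtra-conditions} yields $\sigma_n^2/n\to\sigma^2>0$.

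With $\sigma_n^2\sim\sigma^2 n$ in hand, for any fixed $\delta\in(0,1/4)$ we have $\sigma_n\ge n^{1/4+\delta}$ for all large $n$, so the growth hypothesis of Theorem~\ref{Main_BV} is met; note also that $\phi\in BV$ and $\sup_n\|T_{\beta_1}\cdots T_{\beta_{k-1}}\phi\|$-type uniformity is not needed — what Theorem~\ref{Main_BV} requires is $\sup_n\|\phi_n\|_\alpha<\infty$, which here is trivial since $\phi_n=\phi$ for all $n$ (after the reindexing that matches $W_n$ to $\sum \tilde\phi_i(\TF_i)$, using $m(h\phi)=0$ so that the centered and uncentered sums coincide). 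Theorem~\ref{Main_BV} then gives independent centered Gaussians $Z_k$ with $\sum_{i\le n}E[Z_i^2]=\sigma_n^2+O(\sigma_n)$ and $\sup_{k\le n}|W_k-\sum_{i\le k}Z_i|=o(\sigma_n^{1-\beta})$ for every $\beta<\delta$; choosing $\delta$ close to $1/4$ we obtain an error of the form $o(\sigma_n^{1-\gamma})$ for some $\gamma>0$, and since $E[Z_i^2]=\sigma_i^2-\sigma_{i-1}^2\to\sigma^2$, the $Z_i$ are asymptotically i.i.d.\ $N(0,\sigma^2)$, which is precisely the "standard ASIP with variance $\sigma^2$" in the terminology fixed in the Introduction.

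The only genuine obstacle is the bookkeeping in step (ii): one must make sure that the convergence rate $n^{-\theta}$ with $\theta>1/2$ is exactly what Lemma~\ref{thm.xtra-conditions} (via~(Conv) with $\kappa>1/2$, and the $\mathscr{L}^1$-convergence estimate stated just before it) needs to pin down the limit $\sigma^2$ and to ensure the error terms in expanding $\sigma_n^2$ are $o(n)$; all of this is already assembled in~\cite[Section 5]{CR} and summarized in~\cite[Corollary 5.4]{CR}, so the present proof is essentially a matter of citing that corollary and observing that its hypotheses are identical to those of Theorem~\ref{Main_BV} once the variance growth is known.
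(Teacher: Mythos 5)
Your proposal is correct and follows essentially the same route as the paper: cite \cite[Theorem 3.4(c)]{CR} and \cite[Proposition 4.3]{CR} for (DFLY) and (LB), verify (Exa), (Lip), (Conv), (Pos) so that Lemma~\ref{thm.xtra-conditions} yields $\sigma_n^2/n\to\sigma^2>0$, and then apply Theorem~\ref{Main_BV}, whose growth hypothesis $\sigma_n\ge n^{1/4+\delta}$ is comfortably met. Your extra remarks (absorbing the finitely many $\beta_k$ outside $[\beta-a,\beta+a]$, and the centering via $m(h\phi)=0$) are sensible bookkeeping that the paper leaves implicit.
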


\subsection{Perturbed expanding maps of the circle.}

We consider a $C^2$ expanding map $T$ of the circle
$\mathbb{T}$; let us put $A_k=[v_k, v_{k+1}]; k=1,\cdots,m, v_{m+1}=v_1$
the closed intervals such that $TA_k=\mathbb{T}$ and $T$ is injective over $[v_k,
v_{k+1}).$ The family $\F$ then consists of the perturbed maps $T_{\eps}$ which are
 given by the translations ({\em additive noise}): $T_{\eps}(x)=T(x)+\eps, \ \mbox {mod}\ 1$,
 where
$\eps \in (-1,1).$ We observe that the intervals of local injectivity
$[v_k, v_{k+1}), \ k=1,\cdots,m,$ of $T_{\eps}$ are independent of $\eps$. We
call ${\cal A}$ the partition $\{A_k:k\}$ into intervals of monotonicity. We assume
 there exist constants $\Lambda>1$ and $C_1<\infty$ so that
\begin{equation}\label{du2}
  \inf_{x\in   \mathbb{T}} |DT(x)|\ge \Lambda; \quad
   \sup_{\eps\in(-1,1)}\sup_{x\in \mathbb{T}}\left|\frac{D^2T_{\eps}(x)}{DT_{\eps}(x)}\right|\leq C_1.
\end{equation}

\begin{Lemma} The maps $\F=\{T_\eps:\;\;|\eps|<1\}$ satisfy the conditions of
Lemma~\ref{thm.xtra-conditions}.
\end{Lemma}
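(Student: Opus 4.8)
The plan is to verify, for the family $\F=\{T_\eps:|\eps|<1\}$ of additive perturbations of a fixed $C^2$ expanding map $T$ of the circle, each of the four hypotheses~(Exa), (Lip), (Conv) and~(Pos) in turn, together with the underlying structural properties~(DFLY) and~(LB) which are implicitly required for the distance estimate~\eqref{VC} to be available. First I would recall that since the intervals of local injectivity $[v_k,v_{k+1})$ are independent of $\eps$ and the distortion bound~\eqref{du2} holds uniformly in $\eps$, the standard Lasota--Yorke argument (e.g.\ via the Rychlik formalism, or the explicit computation in~\cite{CR} for this very family) produces constants $A,B<\infty$ and $\rho\in(0,1)$, depending only on $\Lambda$ and $C_1$, such that~\eqref{LY2} holds for every concatenation $P_{\eps_n}\circ\cdots\circ P_{\eps_1}$; this is~(DFLY). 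The uniform lower bound~(LB) follows because each $T_\eps$ is a full-branch expanding map with uniformly bounded distortion, so $P_\eps 1$ is bounded below by a constant depending only on $\Lambda,C_1$, and this persists under composition. In particular the distance estimate~\eqref{VC}, hence the consequence of~\cite[Lemma~2.13]{CR}, is available with $M=1+A\rho^{-1}+B$.

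Next I would check~(Exa): the unperturbed map $T=T_0$ is a $C^2$ expanding circle map, hence mixing with respect to its (unique) absolutely continuous invariant measure, and its transfer operator $P_0$ is quasi-compact on $BV$ with a spectral gap; thus $\|P_0^n f\|_{BV}\le C_1\gamma_0^n\|f\|_{BV}$ for all $f\in BV$ of zero Lebesgue mean, which is exactly~(Exa). For~(Lip) I would estimate $d(P_{\eps},P_{\eps'})$ directly: since $T_\eps$ and $T_{\eps'}$ differ by the rigid translation $x\mapsto x+(\eps-\eps')$, one has $T_{\eps'}=R_{\eps-\eps'}\circ T_\eps$ where $R_t$ is rotation by $t$; the inverse branches are correspondingly translated, and because the branches are uniformly expanding the inverse-branch endpoints and the Jacobians move by $O(|\eps-\eps'|)$ in the relevant norms. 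Writing $P_\eps f$ as the sum over inverse branches of $f\circ(\text{branch})\cdot|\text{Jacobian}|$ and differencing, a routine computation using~\eqref{du2} gives $\|P_\eps f-P_{\eps'}f\|_1\le C_3|\eps-\eps'|\,\|f\|_{BV}$, i.e.~(Lip). The parametrization is literally by $\eps_k$ itself, so~(Conv) is nothing but the hypothesis one imposes on the perturbation sequence, $|\eps_n-\eps_0|\le C_4 n^{-\kappa}$, and will be assumed when the lemma is applied (alternatively it is inherited from whatever convergence hypothesis accompanies the statement). Finally~(Pos): the invariant density $h$ of a $C^2$ expanding circle map is itself bounded below away from zero — indeed $h$ is bounded below by a constant depending only on $\Lambda,C_1$, since $h=\lim P_0^n 1$ and the $P_0^n 1$ are uniformly bounded below by~(LB) applied to the constant sequence $T_0,T_0,\dots$ — which is~(Pos).

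The main obstacle I expect is the Lipschitz estimate~(Lip): one must be careful that the relevant quantity is the $BV$-to-$\mathscr{L}^1$ operator distance, not a $BV$-to-$BV$ distance (the latter would fail, since $\|P_\eps f - P_{\eps'}f\|_{BV}$ need not be small even for nearby $\eps$ when $f$ has sharp variation, because translating a function does not change its total variation while it can create large pointwise differences). The trick, already used in~\cite{CR}, is that the $\mathscr{L}^1$ distance of a function and its small translate is controlled by the modulus of continuity in $\mathscr{L}^1$, which for a $BV$ function is linear in the translation amount with constant the total variation; applied branch-by-branch to $T_\eps$ versus $T_{\eps'}$ this yields the bound $C_3|\eps-\eps'|\,\|f\|_{BV}$ with $C_3$ depending only on the number $m$ of branches, $\Lambda$ and $C_1$. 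Everything else is a packaging of standard Lasota--Yorke and quasi-compactness facts for a single $C^2$ expanding circle map together with the observation that the branch structure is $\eps$-independent, so the uniformity in $\eps$ comes for free.
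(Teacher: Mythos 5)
Your plan is correct and follows essentially the same route as the paper: uniform Lasota--Yorke constants from the $\eps$-independent branch structure and the uniform bound~\eqref{du2}, the lower bound~(LB) from the full-branch property together with a distortion estimate for the composed maps $\TF_n$ (the geometric-series summation over the $n$ steps being the one point your phrase ``persists under composition'' glosses over), and~(Lip) via the branch-by-branch $\mathscr{L}^1$ comparison controlled by the total variation. The only cosmetic differences are that the paper obtains~(Pos) by citing the Bernoulli property of $T$ rather than your equally valid argument via the uniform lower bound on $P_0^n 1$ and the spectral gap, and that you make explicit the~(Exa) and~(Conv) checks that the paper leaves implicit.
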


\begin{proof} (I) (DFLY) It is well known that any such map
 $T_{\eps}$ satisfying~\eqref{du2} verifies a
 Doeblin-Fortet-Lasota-Yorke inequality $||P_{\eps}f||_{BV}\le \rho ||f||_{BV}+B||f||_1$
 where $\rho\in(0,1)$ and $B<\infty$ are independent of $\eps$
($P_{\eps}$ is the associated transfer operator of $T_\eps$).  For any concatenation
of maps one consequently has
$$
\|\P_nf\|_{BV}\le
\rho^k\|f\|_{BV}+\frac{B}{1-\rho}\|f\|_1,
$$
where $\P_n=P_{\eps_k}\circ \cdots\circ P_{\eps_1}$.

\vspace{2mm}

\noindent (II) (LB) In order to obtain the lower bound property~(LB) we have to consider an upper bound
for concatenations of operators. Since each $T_{\eps}$ has $m$ intervals of monotonicity we have
(where $\TF_n=T_{\eps_n}\circ\cdots\circ T_{\eps_1}$ as before)
\begin{equation}\label{Py}
\P_n1(x)=
\sum_{k_n,\cdots,k_1=1}^m\frac{1}{|D\TF_n(T^{-1}_{k_1,\eps_1}\circ\cdots T^{-1}_{k_n,\eps_n}(x))|}\times {\bf 1}_{\TF_nA_{k_1,\cdots,k_n}^{\eps_1,\cdots,\eps_n}}(x)
\end{equation}
where $T^{-1}_{k_l,\eps_l}, k_l\in[1,m],$ denotes the local inverse of $T_{\eps_l}$ restricted to $A_{k_l}$ and
\begin{equation}\label{DM}
A_{k_1,\cdots,k_n}^{\eps_1,\cdots,\eps_n}= T^{-1}_{k_{1},\eps_{1}}\circ \cdots \circ T^{-1}_{k_{n-1},\eps_{n-1}}A_{k_n}  \cap \cdots \cap T^{-1}_{k_1,\eps_1}A_{k_2}\cap A_{k_1}
\end{equation}
is one of the $m^n$ intervals of monotonicity of $\TF_n$. Since those images
satisfy\footnote{This can be proved by induction; for instance for $n=3$ we have $T_{\eps_3}T_{\eps_2}T_{\eps_1}(T^{-1}_{k_{1},\eps_{1}} T^{-1}_{k_{2},\eps_{2}}A_{k_3}  \cap T^{-1}_{k_1,\eps_1}A_{k_2}\cap A_{k_1})=T_{\eps_3}T_{\eps_2}T_{\eps_1}[T^{-1}_{k_1,\eps_1}( T^{-1}_{k_{2},\eps_{2}}A_{k_3}  \cap A_{k_2}\cap T_{\eps_1}A_{k_1})]=T_{\eps_3}T_{\eps_2}(T^{-1}_{k_{2},\eps_{2}}A_{k_3}\cap A_{k_2}\cap T_{\eps_1}A_{k_1})=T_{\eps_3}T_{\eps_2}[T^{-1}_{k_2, \eps_2}(A_{k_3}\cap T_{\eps_2}A_{k_2}\cap T_{\eps_2}T_{\eps_1}A_{k_1})]=T_{\eps_3}(A_{k_3}\cap T_{\eps_2}A_{k_2}\cap T_{\eps_2}T_{\eps_1}A_{k_1}).$}
\begin{equation}\label{cyl}
\TF_nA_{k_1,\cdots,k_n}^{\eps_1,\cdots,\eps_n}=T_{\eps_n}(A_{k_n}\cap T_{\eps_{n-1}}A_{k_{n-1}}\cap \cdots \cap T_{\eps_{n-1}} \circ \cdots \circ T_{\eps_1} A_{k_1})
\end{equation}
and each branch is onto, we have that the inverse image is the full interval.
By the Mean Value Theorem there exists a point $\xi_{k_1,\cdots,k_n}$ in the interior of the
connected interval $A_{k_1,\cdots,k_n}^{\eps_1,\cdots,\eps_n}$ such that
$|D\TF_n(\xi_{k_1,\cdots,k_n})|^{-1}
=|A_{k_1,\cdots,k_n}^{\eps_1,\cdots,\eps_n}|,$
where $|A|$ denotes the length of the connected interval $A$. In order to get distortion estimates,
let us take two points $u, v$ in the closure of $A_{k_1,\cdots,k_n}^{\eps_1,\cdots,\eps_n}$. Then
($\TF_0$ is the identity map)
\begin{eqnarray*}
\left|\frac{D\TF_n(u)}{D\TF_n(v)}\right|
&=&\exp\left(\log|D\TF_n(u)|-\log|D\TF_n(v)|\right)\\
&=&\exp\sum_{j=1}^{n}\left(\log\left|DT_{\eps_{j}}\circ\TF_{j-1}(u)\right|-
\log\left|DT_{\eps_{j}}\circ\TF_{j-1}(v)\right|\right)\\
&=&\exp\sum_{j=1}^{n}
\frac{|D^2T_{\eps_{j}}(\iota_k)|}{|DT_{\eps_{j}}(\iota_j)|}\left|\TF_{j-1}(u)-\TF_{j-1}(v)\right|
\end{eqnarray*}
for some points $\iota_j$ in $\TF_{j-1}A_{k_1,\cdots,k_n}^{\eps_1,\cdots,\eps_n}$.
Using the second bound in (\ref{du2}) and the fact that $ |\TF_{j-1}(u)-\TF_{j-1}(v)|\le \Lambda^{-(j-1)}$
 we finally have
$$
|D\TF_n(u)/D\TF_n(v)|\le e^{\frac{C_1}{1-\Lambda}}
$$
which in turn implies that
$$
\P_n1(x)\ge e^{-\frac{C_1}{1-\Lambda}}
$$
and this independently of any choice of the $\eps_k, k=1,\cdots,n$ and of $n$.

\vspace{2mm}
\noindent (III) The strict positivity condition~(Pos) holds since the map $T$ is Bernoulli and
for such maps it is well known that its invariant densities are uniformly bounded from below away
from zero~\cite{AF}.

\vspace{2mm}
\noindent(IV) The continuity condition~(Lip) follows the
same proof as in the next section and therefore we refer to that.
\end{proof}

We now conclude by Lemma~\ref{thm.xtra-conditions} the following result:

\begin{Theorem}
Let $\F$ be a family of functions as described in this section. Then for any function
$\phi$ which is not a coboundary for $T_\beta$ we have that the random variables
$$
W_n=\sum_{j=0}^{n-1}\phi\circ\TF_j
$$
satisfy a standard ASIP with variance $\sigma^2$.
\end{Theorem}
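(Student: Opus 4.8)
The plan is to reduce this statement directly to Theorem~\ref{Main_BV} by verifying its hypotheses for the family $\F$ of perturbed expanding maps of the circle. First I would observe that, as established in the preceding lemma, the family $\F$ satisfies conditions~(DFLY) and~(LB) (parts (I) and (II) of that proof), and moreover satisfies the conditions~(Exa), (Lip), (Conv), (Pos) of Lemma~\ref{thm.xtra-conditions}. The algebraic convergence~(Conv) with exponent $\kappa>1/2$ should be imposed on the parameters $\eps_n$ (the assumption $|\eps_n-\eps_0|\le C/n^\kappa$), which together with~(Lip) gives the rate $d(\P_n,P_0^n)=O(n^{-\kappa})$ needed to run Lemma~\ref{thm.xtra-conditions}.

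Next I would invoke Lemma~\ref{thm.xtra-conditions}: since $\phi\in BV$ is assumed not to be a coboundary for $T_0$ (written $T_\beta$ in the statement, matching the $\beta$-transformation notation, but here meaning the unperturbed map $T$), one concludes $\sigma_n^2/n\to\sigma^2>0$ as $n\to\infty$. In particular $\sigma_n^2\sim n\sigma^2$, so for all large $n$ we have $\sigma_n\ge n^{1/2-\epsilon'}\ge n^{1/4+\delta}$ for a suitable $\delta\in(0,1/4)$; this is the variance growth hypothesis of Theorem~\ref{Main_BV}. The boundedness hypothesis $\sup_n\|\phi_n\|_\alpha<\infty$ is automatic since we take the constant sequence $\phi_n=\phi$, so $\sup_n E|\phi_n|^4<\infty$ as well.

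With all hypotheses of Theorem~\ref{Main_BV} in place, that theorem yields a sequence of independent centered Gaussians $(Z_k)$ with $\sum_{j=1}^n E[Z_j^2]=\sigma_n^2+O(\sigma_n)$ such that $\sum_{j=1}^n\tilde\phi\circ\TF_j=\sum_{j=1}^n Z_j+o(\sigma_n^{1-\beta})$ a.s. for any $\beta<\delta$. Since $\sigma_n^2\sim n\sigma^2$, the error $o(\sigma_n^{1-\beta})$ is $o(n^{(1-\beta)/2})=o(n^{1/2-\beta/2})$, so this is precisely a standard ASIP with variance $\sigma^2$ in the sense defined in the introduction. One small bookkeeping point: the statement writes $W_n=\sum_{j=0}^{n-1}\phi\circ\TF_j$ with the convention $\TF_0=\mathrm{id}$, whereas Theorem~\ref{Main_BV} is phrased with $\sum_{j=1}^n\phi\circ\TF_j$; shifting the index by one changes the sum by the single bounded term $\phi\circ\TF_0$, which is absorbed into the error. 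Also, $\phi$ must be centered appropriately — here the hypothesis $m(h\phi)=0$ (consistent with the $\beta$-transformation subsection) ensures $\tilde\phi=\phi$ in the relevant sense, or more precisely that the recentering $\tilde\phi_n=\phi-m(\phi\circ\TF_n)$ does not affect the conclusion since $m(\phi\circ\TF_n)\to m(h\phi)=0$ fast enough by~(Conv), contributing only a bounded correction to the partial sums.

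The main obstacle, such as it is, is not in the ASIP machinery — that is entirely inherited from Theorem~\ref{Main_BV} — but in confirming that the four structural conditions feeding Lemma~\ref{thm.xtra-conditions} genuinely hold for the additive-noise family, in particular the Lipschitz continuity~(Lip) $d(P_{\eps_k},P_{\eps_j})\le C_3|\eps_k-\eps_j|$ uniformly in the parameters. This is the content referenced as part (IV) of the preceding lemma ("follows the same proof as in the next section"), and amounts to a standard but slightly delicate $\mathscr{L}^1$-perturbation estimate for transfer operators of $C^2$ expanding maps under translation of the map; the uniformity of the constant over all $\eps$ is what must be checked with care. Everything else is routine verification and index bookkeeping.
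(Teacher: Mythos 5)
Your proposal is correct and follows essentially the same route as the paper: the paper gives no separate proof of this theorem, stating only that it follows from the preceding lemma (which verifies (DFLY), (LB), (Pos), (Lip) for the perturbed circle maps) combined with Lemma~\ref{thm.xtra-conditions} (giving $\sigma_n^2/n\to\sigma^2>0$) and then Theorem~\ref{Main_BV}. Your additional remarks --- that the decay condition~(Conv) on the $\eps_k$ must be imposed, that $T_\beta$ in the statement should read as the unperturbed map $T$, and the index/centering bookkeeping --- correctly fill in details the paper leaves implicit.
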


\subsection{Covering maps: special cases}\label{EM}
\subsubsection{One dimensional maps}\label{one.dimensional.maps}
The next example concerns piecewise uniformly expanding maps $T$ on the
unit interval. The family ${\cal F}$ will consist of maps $T_{\eps},$ which
are constructed with {\em local} additive noise starting from $T$, which in
turn satisfies:
\begin{itemize}
\item (i) $T$ is locally injective on the open intervals $A_k,
  k=1,\dots,m,$ that give a partition  $\mathcal{A}=\{A_k:k\}$ of the unit
  interval $[0,1]=M$ (up to zero measure sets).
\item (ii) $T$ is $C^2$ on each $A_k$ and has a $C^2$ extension to the
  boundaries. Moreover there exist $\Lambda>1, C_1<\infty$, such that
  $\inf_{x\in M} |DT(x)|\ge \Lambda$ and $\sup_{x\in M}\left|\frac{D^2T(x)}{DT(x)}\right|\leq C_1$.

\end{itemize}

At this point we give the construction of the family $\F$ of maps $T_{\eps}$ by
defining them locally on each interval $A_k$. On each interval
$A_k$ we put $T_{\eps}(x)=T(x)+\eps$ where $|\eps|<1$ and we extend by
continuity to the boundaries.  We restrict to values
of $\eps$ so that the image $T_{\eps}(A_k)$ stays in the unit
interval; this we achieve for a given $\eps$ by choosing the sign of $\eps$ so that
 the image of $A_k$ remains in the unit interval; if not we do {\em not} move the map. The sign will consequently
  vary with each interval.

We add now new the new assumption. Assume there exists a set $\mathcal{J}$ so that:
\begin{itemize}
\item (iii) $\mathcal{J}\subset T_{\eps}A_k$ for all $T_{\eps}\in {\cal F}$ and $k=1,\dots,m$.

\item (iv) The map $T$ send $\mathcal{J}$ on $[0,1]$ and therefore it will not be affected there by the addition of $\eps.$ In particular it  will exist $1\ge L'>0$ such that $\forall k=1,\dots,q$ we have $|T(\mathcal{J})\cap A_k|>L'.$
\end{itemize}

\begin{Lemma}
The maps $T_\eps$ satsify the conditions~(DFLY), (LB), (Pos) and (Lip).
\end{Lemma}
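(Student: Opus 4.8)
The plan is to verify the four properties—(DFLY), (LB), (Pos) and (Lip)—for this family $\F$ of locally perturbed $C^2$ expanding maps of the interval, essentially by adapting the arguments already carried out in the previous subsection for expanding maps of the circle, and paying attention to the new complication that the perturbation is applied branch-by-branch and that near the boundaries of the partition elements the map is simply left unmoved. I would organize the proof in four blocks, one per property.

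For (DFLY), I would first record that each $T_\eps$ still satisfies the two bounds $\inf_x |DT_\eps(x)| \ge \Lambda$ and $\sup_x |D^2 T_\eps(x)/DT_\eps(x)| \le C_1$ uniformly in $\eps$, since on each $A_k$ the perturbation is a translation (derivative unchanged) or the identity (no perturbation), and the partition $\mathcal{A}$ of monotonicity intervals is fixed. The standard Lasota-Yorke estimate then gives $\|P_\eps f\|_{BV} \le \rho \|f\|_{BV} + B\|f\|_1$ with $\rho\in(0,1)$ and $B<\infty$ independent of $\eps$ (the constant $\rho$ controlled by $\Lambda$ together with the minimal length of the partition elements, $B$ by $C_1$ and this same minimal length). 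Iterating and summing the geometric series gives the (DFLY) bound for concatenations $\P_n = P_{\eps_n}\circ\cdots\circ P_{\eps_1}$, exactly as in the circle case.

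For (LB), I would follow the same route as in the perturbed-circle lemma: write $\P_n 1(x)$ as a sum over inverse branches of $1/|D\TF_n|$ times an indicator of the image of the corresponding monotonicity interval of $\TF_n$, obtain a uniform distortion bound $|D\TF_n(u)/D\TF_n(v)| \le e^{C_1/(1-\Lambda)}$ on each such interval via the telescoping/Mean Value Theorem computation (using $|\TF_{j-1}(u)-\TF_{j-1}(v)| \le \Lambda^{-(j-1)}$), and combine this with the covering hypothesis. Here the new assumptions (iii)–(iv) replace the "each branch is onto" fact used in the circle case: since $\mathcal{J}\subset T_\eps A_k$ for every branch and $T$ maps $\mathcal{J}$ onto $[0,1]$ with $|T(\mathcal{J})\cap A_k|>L'$ for all $k$, there is a definite-measure sub-branch through every cylinder, so the sum defining $\P_n 1(x)$ can be bounded below by $\delta = L' e^{-C_1/(1-\Lambda)}$ (or a comparable explicit constant), uniformly in $n$ and in the choice of the $\eps_k$. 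I expect this is the step requiring the most care, because one has to check that conditions (iii)–(iv) genuinely furnish a full-measure covering chain at every level $n$ despite some branches being ``frozen'' (unmoved) — i.e. one needs that at every step and for every symbol there is a sub-interval of $\mathcal{J}$-type mapping onto $[0,1]$, so that the induction on $n$ producing~\eqref{cyl} still goes through.

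For (Pos), strict positivity of the limiting density $h$ for $T_0$ follows because $T_0$ is a covering (Bernoulli/mixing) piecewise expanding map, for which it is classical that the invariant density is bounded below away from zero (as cited via \cite{AF}); alternatively it follows from (LB) applied to the single map $T_0$ by taking a limit. For (Lip), i.e.\ $d(P_{\eps_k}, P_{\eps_j}) \le C_3|\eps_k-\eps_j|$, I would defer to the computation carried out in the subsequent subsection (as the paper does in the circle case, part (IV)): the point is that $T_{\eps}$ depends on $\eps$ by a translation on each branch, so for $f\in BV$ with $\|f\|_{BV}\le 1$ one compares $P_{\eps_k}f$ and $P_{\eps_j}f$ by a change of variables and controls the $\mathscr{L}^1$ difference by the $BV$-norm of $f$ times $|\eps_k-\eps_j|$, uniformly. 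Having verified all four properties, the lemma is proved; combined with Lemma~\ref{thm.xtra-conditions} (and, under (Conv), with the ASIP machinery of Theorem~\ref{Main_BV}) this yields the standard ASIP for Birkhoff sums of a non-coboundary observable along such sequential systems.
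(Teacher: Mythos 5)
Your treatment of (DFLY), (LB) and (Pos) is essentially the paper's. For (DFLY) and (LB) you follow the same route: uniform Lasota--Yorke constants from assumption (ii), then distortion plus the covering assumptions (iii)--(iv), which guarantee that every image $\TF_nA_{k_1,\cdots,k_n}^{\eps_1,\cdots,\eps_n}$ contains the push-forward of $\mathcal{J}\cap A_{k_n}$ and hence has length at least $L=\Lambda L'$. For (Pos), note that the paper does \emph{not} simply invoke a classical Bernoulli-map result (the map $T$ is not assumed full-branched here); it argues directly that $h\in BV$ is bounded below by some $h_*>0$ on an interval $J$, picks a cylinder $R_n\subset J$ of the generating partition, uses the covering property to push $R_n$ onto $\mathcal{J}$ and then onto all of $M$, and concludes $h=P^{n+1}h\ge h_*\|DT^{n+1}\|_\infty^{-1}$ everywhere. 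Your alternative --- deducing $h\ge\delta$ from (LB) by passing to the limit in $P^n1$ --- is a legitimate and arguably cleaner substitute, at the price of invoking the convergence $P^n1\to h$, i.e.\ essentially property (Exa), which is a separate hypothesis.

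The genuine gap is in (Lip). You propose to ``defer to the computation carried out in the subsequent subsection,'' but there is no such place: this lemma is precisely where the one-dimensional Lipschitz estimate must be proved (it is the \emph{circle} case that defers to this subsection, and the \emph{multidimensional} case that defers to Proposition~4.3 of \cite{AFV}). The estimate $\|P_{\eps_1}f-P_{\eps_2}f\|_1\le\tilde C|\eps_1-\eps_2|\,\|f\|_{BV}$ is not a one-line change of variables; the paper decomposes the difference into three terms, two of which require specific ideas your sketch omits. First, because $T_{\eps_1}$ and $T_{\eps_2}$ are different translations on each branch, there is a set of measure $O(|\eps_1-\eps_2|)$ of points $x$ having a preimage in $A_l$ under one map but not the other; this produces the boundary term $E_1$, controlled by $\|\hat P_\eps f\|_\infty$ and distortion. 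Second, and more importantly, the main term involves $f(T^{-1}_{\eps_1,l}x)-f(T^{-1}_{\eps_2,l}x)$ with $|T^{-1}_{\eps_1,l}x-T^{-1}_{\eps_2,l}x|=|\eps_1-\eps_2|\,|DT(\xi')|^{-1}$, and to integrate this in $x$ one needs the variation inequality $\int\sup_{|y-x|\le t}|f(y)-f(x)|\,dx\le 2t\,\mathrm{Var}(f)$; this is exactly where the full $BV$ norm, rather than $\|f\|_\infty$ or $\|f\|_1$, enters, and without it the claimed Lipschitz bound in the $BV\to\mathscr{L}^1$ operator distance does not follow. You should carry out this three-term estimate explicitly rather than deferring it.
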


\begin{proof} (I) The condition~(DFLY) follows from assumption~(ii).

  \vspace{2mm}
  \noindent (II) In order to prove the lower bound condition~(LB) we begin by observing that,
   thanks to~(iv), the union over the $m^n$ images of the intervals of monotonicity of
  {\em any} concatenation of $n$ maps, still covers $M$.
  Assumption~(iii) above does not require that each branch of the maps in
  ${\cal  F}$ be onto; instead, and thanks again to~(\ref{cyl}), we see that each
  image $\TF_nA_{k_1,\cdots,k_n}^{\eps_1,\cdots,\eps_n}$
  will have at least length $L=\Lambda L'$, so that the reciprocal of the derivative of
  $\TF_n$ over
  $A_{k_1,\cdots,k_n}^{\eps_1,\cdots,\eps_n}$ will be of order
  $L^{-1}|A_{k_1,\cdots,k_n}^{\eps_1,\cdots,\eps_n}|$
  (as before $\TF_n=T_{\eps_n}\circ\cdots\circ T_{\eps_1}$).
   By distortion we
  make it precise by multiplying by the same distortion constant
  $e^{\frac{C_1}{1-\Lambda}}$ as above. In conclusion we have
$$
P_{\eps_n}\circ \cdots\circ P_{\eps_1}1(x)\ge L^{-1} e^{-\frac{C_1}{1-\Lambda}}
$$
(III) To show strict positivity of the invariant density $h$ for the map $T$ we use Assumption~(iv).
Since $h$ is of bounded variation, it will be strictly positive on an open
interval $J$, where $\inf_{x\in J}h(x)\ge h_*$ where $h_*>0$.
We now choose a partition element $R_n$ of the join
${\cal A}^n=\bigvee_{i=0}^{n-1}T^{-i}{\cal A},$  such that $R_n\subset J$.
This is possible by choosing $n$ large enough since the partition ${\cal A}$ is generating.
 By iterating $n$ times forward we achieve that $\TF_nR_n$  covers $\mathcal{J}$ and therefore after $n+1$
iterations the image of $\mathcal{J}$  will cover the entire unit interval. Then for
any $x$ in the unit interval:
$$
h(x)=P^{n+1}h(x)\ge
h(T_{w}^{-(n+1)}(x))\|DT^{n+1}\|_\infty^{-1}
\ge h_* \|DT^{n+1}\|_\infty^{-1},
$$
 where $T_{w}^{-(n+1)}$ is one
of the  inverse branches of $T^{n+1}$ which sends $x$ into $R_n$.

\vspace{2mm}
\noindent (IV) To prove the continuity property~(Lip) we must estimate the difference
$||P_{\eps_1}f-P_{\eps_2}f||_1$ for all $f$ in BV.  We will adapt for that  to the one-dimensional case a similar property proved in the multidimensional setting in Proposition 4.3 in \cite{AFV} We have
\begin{eqnarray*}
 P_{\eps_1}f(x)-P_{\eps_2}f(x)
 &=&E_1(x)+\sum_{l=1}^m( f\cdot{\bf
   1}_{U^c_n})(T^{-1}_{\eps_1,l}x)\left[\frac{1}{|DT_{\eps_1}(T^{-1}_{\eps_1,l}x)|}-\frac{1}{|DT_{\eps_2}(T^{-1}_{\eps_2,l}x)|}\right]+\\
&&+\sum_{l=1}^m\frac{1}{|DT_{\eps_2}(T^{-1}_{\eps_2,l}x)|}[(f\cdot{\bf 1}_{U^c_n})(T^{-1}_{\eps_1,l}x)-(f\cdot{\bf 1}_{U^c_n})(T^{-1}_{\eps_2,l}x)]\\
&=&E_1(x)+E_2(x)+E_3(x)
\end{eqnarray*}
The term $E_1$ comes from those points $x$ which we omitted in the sum because they have only
 one pre-image in each interval of monotonicity.  The total error $E_1=\int E_1(x)\,dx$ is then
 estimated by
 $|E_1|\le  4m|\eps_1-\eps_2|\cdot\|\hat{P}_{\eps}f\|_{\infty}$.
 But $\|\hat{P}_{\eps}f\|_{\infty}\le \|f\|_{\infty}\sum_{l=1}^m\frac{|DT_{\eps_2}(T^{-1}_{\eps_2,l}x')|}{|DT_{\eps_2}
 (T^{-1}_{\eps_2,l}x)|}\frac{1}{|DT_{\eps_2}(T^{-1}_{\eps_2,l}x')|},$
 where  $x'$ is the point  so that $|DT_{\eps_2}(T^{-1}_{\eps_2,l}x')|\cdot|A_l|\ge \eta$,
 and $\eta$ is the minimum of the length $T(A_k), k=1,\dots,m.$ Due to the bounded distortion
 property, the first ratio inside the summation is  bounded by some constant $D_c$; therefore
 $$
 E_1\le 4m|\eps_1-\eps_2|\cdot\|f\|_{\infty} \frac{D_c}{\eta} \sum_{l=1}^m |A_l|
 \le  4m|\eps_1-\eps_2|\cdot\|f\|_{\infty} \frac{D_c}{\eta}
 $$

 We now bound $E_2$. For any $l$, the term in the square bracket  (we
 drop this index in the derivatives in the next formulas), will be equal to
 $\frac{D^2T(\xi)}{[DT(\xi)]^2}|T^{-1}_{\eps_1}(x)-T^{-1}_{\eps_2}(x)|$,
where $\xi$ is an interior point of $A_l.$ The first factor is
 uniformly bounded by $C_1.$ Since
 $x=T_{\eps_1}(T^{-1}_{\eps_1}(x))=T((T^{-1}_{\eps_1}(x))+\eps_1=T((T^{-1}_{\eps_2}(x))+\eps_2=
 T_{\eps_2}(T^{-1}_{\eps_2}(x))$, we obtain
 $|T^{-1}_{\eps_1}(x)-T^{-1}_{\eps_2}(x)|=|\eps_1-\eps_2||DT(\xi')|^{-1},$
 for some $\xi'\in A_l$. We now use distortion to replace $\xi'$ with $T^{-1}_{\eps_1,l}x$ and get
\begin{eqnarray*}
\int |E_2(x)| \,dx&\le& |\eps_1-\eps_2| C_1 D_c
\int \sum_{l=1}^m |f(T^{-1}_{\eps_1,l})|\frac{1}{|DT_{\eps_1}(T^{-1}_{\eps_1,l}x)|}\,dx\\
&=&|\eps_1-\eps_2| C_1 D_c\int P_{\eps_1}(|f|)(x) dx\\
&=&|\eps_1-\eps_2| C_1 D_c\|f\|_1.
\end{eqnarray*}
To bound the third error term we use formula~(3.11) in~\cite{CR}
$$
\int \sup_{|y-x|\le t}|f(y)-f(x)|dx\le 2t\mbox{Var}(f).
$$
and again use the fact that $|T^{-1}_{\eps_1}(x)-T^{-1}_{\eps_2}(x)|=|\eps_1-\eps_2||DT(\xi')|^{-1}$,
for some  $\xi'\in A_l.$ Integrating $E_3(x)$ yields
$$
\int |E_{3}(x)|dx
\le \ 2m \Lambda^{-1} |\eps_1-\eps_2|\mbox{Var}(f{\bf 1}_{U^c_n})
\le10m \Lambda^{-1}\  |\eps_1-\eps_2|\mbox{Var}(f)
$$
Combining the three error estimates we conclude that there exists a constant $\tilde{C}$ such that
$$
||P_{\eps_1}f-P_{\eps_2}f||_1\le \tilde{C} |\eps_1-\eps_2|\|f\|_{BV}.
$$
\end{proof}

\begin{Theorem}
Let $\F$ be the family of maps defined above and consisting of  the sequence $\{T_{\eps_k}\},$ where the sequence $\{\eps_k\}_{k\ge 1}$ satisfies  $|\eps_k|\le k^{-\theta}$, $\theta > 1/2$. If $\phi$ is not a coboundary for $T$, then
$$
W_n=\sum_{j=0}^{n-1}\phi\circ\TF_j
$$
satisfies  a standard  ASIP with variance $\sigma^2$.
\end{Theorem}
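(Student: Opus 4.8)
The plan is to deduce the theorem from three facts already available: the preceding lemma, which establishes (DFLY), (LB), (Pos) and (Lip) for the family $\mathcal F=\{T_{\eps_k}\}$; Lemma~\ref{thm.xtra-conditions}, the Conze--Raugi package which produces linear growth of the variance; and Theorem~\ref{Main_BV}, the ASIP for BV observations on sequential systems. So the proof will be an assembly, with the only real work being the bookkeeping of error rates. Throughout I would normalize $\phi$ so that $\int\phi\,d\mu=0$, where $d\mu=h\,dm$ is the absolutely continuous invariant probability measure of the limit map $T_0=T$; this is the relevant normalization, exactly as the condition $m(h\phi)=0$ imposed in the $\beta$-transformation theorem above (for a general $\phi\in BV$ the sum $W_n$ acquires the linear drift $n\,\mu(\phi)$, so the centered statement is the content).

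First I would check that all the standing hypotheses are in force. Properties (DFLY), (LB), (Pos), (Lip) are supplied by the previous lemma. The convergence property (Conv) holds with $\kappa=\theta>1/2$, directly from $|\eps_n-\eps_0|=|\eps_n|\le n^{-\theta}$. The exactness property (Exa) --- a spectral gap for $P_0$ on $BV$ --- follows from quasi-compactness of $P_0$, a consequence of (DFLY) applied to the constant sequence $P_0,\dots,P_0$, together with the covering property of $T$ used in the proof of (Pos): some partition element iterates forward onto $\mathcal J$, whose image then covers $[0,1]$, so $T$ is topologically exact and $1$ is a simple isolated eigenvalue. With these, Lemma~\ref{thm.xtra-conditions} applies, and since $\phi$ is not a coboundary for $T_0$ it yields $\sigma_n^2/n\to\sigma^2>0$; in particular $\sigma_n\ge n^{1/4+\delta}$ for large $n$ with any fixed $\delta<1/4$. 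Running (Conv) and (Lip) through (DS) and \cite[Lemma~2.13]{CR}, with cutoff $p\sim\log n$, also yields the two quantitative facts needed later: $\|\P_j 1-h\|_1=\mathcal O(j^{-\theta}\log j)$ and $\sigma_n^2=\sigma^2 n+\mathcal O(n^{1-\theta}\log n)$.

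I then apply Theorem~\ref{Main_BV} with $T_k=T_{\eps_k}$, $m$ the Lebesgue measure, $\mathcal V=BV$, $\|\cdot\|_\alpha=\|\cdot\|_{BV}$, and the constant sequence $\phi_n\equiv\phi$ (so that $\sup_n\|\phi_n\|_{BV}<\infty$ is trivial and the variance condition holds by the previous step). This produces independent centered Gaussians $(Z_i)$ with $\sum_{i=1}^nE[Z_i^2]=\sigma_n^2+\mathcal O(\sigma_n)$ and
\[
\sup_{1\le k\le n}\Bigl|\sum_{i=1}^k\tilde\phi_i\circ\TF_i-\sum_{i=1}^kZ_i\Bigr|=o(\sigma_n^{1-\beta})\quad m\text{-a.s.},\qquad\tilde\phi_i=\phi-m(\phi\circ\TF_i),
\]
for every $\beta<\delta$.

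It remains to pass from $\sum\tilde\phi_i\circ\TF_i$ to $W_n$ and from the Gaussian array to a single Brownian motion with variance $\sigma^2$. Write $W_n=\sum_{j=0}^{n-1}\phi\circ\TF_j=\phi+\sum_{j=1}^{n-1}\tilde\phi_j\circ\TF_j+\sum_{j=1}^{n-1}m(\phi\circ\TF_j)$; since $m(\phi\circ\TF_j)=\int\phi\,(\P_j1)\,dm$ and $\int\phi h\,dm=0$, the bound $\|\P_j1-h\|_1=\mathcal O(j^{-\theta}\log j)$ makes the centering sum $\mathcal O(n^{1-\theta}\log n)$, which is $o(\sigma_n^{1-\beta})$ precisely when $\beta<2\theta-1$ (here the hypothesis $\theta>1/2$ enters), while the leftover $\phi$ and the off-by-one shift of indices are $\mathcal O(1)$. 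Hence $W_n=\sum_{i=1}^{n-1}Z_i+o(\sigma_n^{1-\beta})$ a.s. Realizing $\sum_{i\le k}Z_i=B\bigl(\sum_{i\le k}E[Z_i^2]\bigr)$ for a standard Brownian motion $B$, the clock equals $\sigma^2 n+\mathcal O(n^{1-\theta}\log n+\sqrt n)$, so L\'evy's modulus of continuity gives $|B(\text{clock})-B(\sigma^2 n)|=o(\sigma_n^{1-\beta})$ a.s.; therefore $W_n=B(\sigma^2 n)+o(\sigma_n^{1-\beta})$ a.s., which is a standard ASIP with variance $\sigma^2$, valid for any $0<\beta<\min\{1/4,\,2\theta-1\}$. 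The proof has no genuinely difficult step; the points that require attention are the verification of (Exa) (not stated by the previous lemma but immediate from (DFLY) plus the covering property) and the rate bookkeeping just described, which is exactly where $\theta>1/2$ is used and which fixes the admissible range of $\beta$.
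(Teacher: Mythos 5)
Your proposal is correct and follows exactly the route the paper intends (and leaves implicit): the preceding lemma supplies (DFLY), (LB), (Pos), (Lip), the decay rate of $\eps_k$ gives (Conv), Lemma~\ref{thm.xtra-conditions} then yields $\sigma_n^2\sim\sigma^2 n$, and Theorem~\ref{Main_BV} with the constant observable $\phi_n\equiv\phi$ delivers the ASIP. The extra bookkeeping you supply --- verifying (Exa) from quasi-compactness plus covering, normalizing by $\int\phi h\,dm=0$, bounding the drift $\sum_j m(\phi\circ\TF_j)$ via $\|\P_j1-h\|_1$, and matching the Gaussian clock to $B(\sigma^2n)$ --- is exactly what the paper omits and is handled correctly.
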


\subsubsection{Multidimensional maps}
 We give here a multidimensional version of the maps
considered in the preceding section; these maps were extensively
investigated in \cite{S, HV, AFV, AFLV, HNVZ} and we defer to those papers
for more details. Let $M$ be a compact subset of $\mathbb{R}^N$
which is the closure of its non-empty interior. We take a map $T:M\to M$
and let $\mathcal{A}=\{A_i\}_{i=1}^m$ be a finite family of disjoint open sets such
that the Lebesgue measure of $M\setminus\bigcup_{i}A_i$ is zero, and there
exist open sets $\tilde{A_i}\supset\overline{A_i}$ and $C^{1+\alpha}$ maps
$T_i: \tilde{A_i}\to \mathbb{R}^N$, for some real number $0<\alpha\leq 1$
and some sufficiently small real number $\eps_1>0$ such that
\begin{enumerate}
\item $T_i(\tilde{A_i})\supset B_{\eps_1}(T(A_i))$ for each $i$, where
  $B_{\eps}(V)$ denotes a neighborhood of size $\eps$ of the set $V.$ The
  maps $T_i$ are the local extensions of $T$ to the $\tilde{A_i}.$

\item there exists a constant $C_1$ so that for each $i$ and $x,y\in T(A_i)$ with
$\mbox{dist}(x,y)\leq\eps_1$,
$$
|\det DT_i^{-1}(x)-\det DT_i^{-1}(y)|\leq C_1|\det DT_i^{-1}(x)|\mbox{dist}(x,y)^\alpha;
$$

\item there exists $s=s(T)<1$ such that $\forall x,y\in T(\tilde{A}_i) \textrm{ with } \mbox{dist}(x,y)\leq\eps_1$, we have
$$\mbox{dist}(T_i^{-1}x,T_i^{-1}y)\leq s\, \mbox{dist}(x,y);$$

\item each $\partial A_i$ is a codimension-one embedded compact
  piecewise $C^1$ submanifold and
  \begin{equation}\label{sc}s^\alpha+\frac{4s}{1-s}Z(T)\frac{\gamma_{N-1}}{\gamma_N}<1,\end{equation}
  where $Z(T)=\sup\limits_{x}\sum\limits_i \#\{\textrm{smooth pieces
    intersecting } \partial A_i \textrm{ containing }x\}$ and $\gamma_N$ is
  the volume of the unit ball in $\mathbb{R}^N$.
\end{enumerate}

Given such a map $T$ we define locally on each $A_i$ the map $T_{\eps}$ by
$T_{\eps}(x):=T(x)+\eps$ where now $\eps$ is an $n$-dimensional vector with
all the components of absolute value less than one. As in the previous
example the translation by $\eps$ is allowed if the image $T_{\eps}A_i$
remains in $M$: in this regard, we could play with the sign of the
components of $\eps$ or do not move the map at all. As in the one dimensional case,
we shall also make the following  assumption on ${\cal F}$.
We assume that there exists a set $\mathcal{J}$ satisfying:

\begin{itemize}
\item [(i)] $\mathcal{J}\subset T_{\eps}A_k$ for all $\forall \;T_{\eps}\in{\cal F}$ and
  for all $k=1,\dots,m$.

\item [(ii)]  $T\mathcal{J}$ is the whole $M$, which in turn implies that there exists $1\ge L'>0$ such that $\forall k=1,\dots,q$ and $\forall T_{\eps}\in {\cal F},$ $\mbox{diameter}(T_{\eps}(\mathcal{J})\cap A_k)>L'.$ 
\end{itemize}

As $\mathcal V\subset \mathscr{L}^1(m)$ we use the space of quasi-H\"older functions,
for which we refer again to \cite{S, HV}.

\begin{Theorem}
  Assume $T:M\to M$ is a map as above such that it has only one absolutely
  continuous invariant measure, which is also mixing. If conditions (i) and
  (ii) hold, let $\F$ be the family of maps  consisting of  the sequence $\{T_{\eps_k}\},$ where the sequence $\{\eps_k\}_{k\ge 1}$ satisfies  $||\eps_k||\le k^{-\theta}$, $\theta > 1/2$. If $\phi$ is not a coboundary for $T$, then
$$
W_n=\sum_{j=0}^{n-1}\phi\circ\TF_j
$$
satisfies a standard  ASIP with variance $\sigma^2$.
\end{Theorem}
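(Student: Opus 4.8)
The plan is to reduce this multidimensional statement to the abstract framework of Lemma~\ref{thm.xtra-conditions}, exactly as was done for the one-dimensional covering maps in Section~\ref{one.dimensional.maps}. To do this I first need to verify that the family $\F=\{T_{\eps_k}\}$ satisfies the four structural hypotheses of that lemma, namely~(Exa), (Lip), (Conv) and~(Pos), together with the background conditions~(DFLY) and~(LB) that guarantee the variance formula makes sense. The condition~(Conv) is immediate from the assumption $\|\eps_k\|\le k^{-\theta}$ with $\theta>1/2$, so $\kappa=\theta>1/2$. The exactness property~(Exa) is the hypothesis that the unperturbed map $T$ has a spectral gap on the quasi-H\"older space; this is precisely the content of the work in~\cite{S,HV,AFV} under conditions (1)--(4), together with the assumption that $T$ has a unique mixing a.c.i.m., so $P_0^n f\to h\int f\,dm$ exponentially in the quasi-H\"older norm.

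The core work is checking~(DFLY), (LB), (Pos) and~(Lip) for the perturbed family, and this parallels the one-dimensional lemma but in the quasi-H\"older setting. For~(DFLY) one invokes the uniform Lasota--Yorke inequality for maps satisfying (1)--(4): the key point is that conditions (2)--(4) are stable under the additive perturbation $T_\eps=T+\eps$ since the inverse-branch geometry and the partition $\mathcal A$ are unchanged, giving constants $\rho\in(0,1)$, $B<\infty$ uniform in $\eps$, and then the concatenation bound follows by iterating. For~(LB) I would use assumptions~(i) and~(ii): since $T\mathcal J=M$ and $\mathcal J\subset T_\eps A_k$ for every $k$ and every $\eps$, the images of the monotonicity domains of any concatenation $\TF_n$ still cover $M$ with a definite size $\gtrsim L'$, and bounded distortion (from condition (2)) then yields a uniform lower bound $\P_n 1\ge\delta>0$ just as in the scalar case. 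Positivity~(Pos) of the density $h$ of $T$ follows from the same argument as in the one-dimensional lemma: $h$ is quasi-H\"older hence bounded below on some ball, one picks a dynamical refinement $R_n$ inside that ball, iterates forward using the covering property to spread $R_n$ over $\mathcal J$ and then over all of $M$, and reads off $h=P^{n+1}h\gtrsim h_*\|DT^{n+1}\|_\infty^{-1}>0$.

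The main obstacle — and the step that needs genuine care — is the Lipschitz continuity estimate~(Lip): $d(P_{\eps},P_{\eps'})\le C_3\|\eps-\eps'\|$ in the $\mathscr L^1$ operator norm on quasi-H\"older functions. The excerpt explicitly points to Proposition~4.3 of~\cite{AFV} as the multidimensional analogue; I would follow that blueprint, splitting $P_{\eps}f-P_{\eps'}f$ into three pieces analogous to $E_1,E_2,E_3$ in the one-dimensional proof. The first accounts for points with incomplete preimages near $\partial A_k$ and is controlled by a boundary term proportional to $\|\eps-\eps'\|$ times $\|f\|_\infty$, using that the $A_k$ have piecewise $C^1$ boundaries (condition (4)). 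The second compares the Jacobian factors $|\det DT_{i}^{-1}|$ along the two perturbations, using the H\"older regularity of the Jacobian (condition (2)) and the contraction estimate $\mathrm{dist}(T^{-1}_{\eps}x,T^{-1}_{\eps'}x)\lesssim\|\eps-\eps'\|$ (condition (3)), giving a bound proportional to $\|\eps-\eps'\|^{\alpha}$—here one must be slightly careful, but since the parameters converge polynomially with rate $\theta>1/2$ one can absorb the exponent $\alpha$ by shrinking $\theta$-admissibility, or argue that the relevant term is actually $\mathscr L^1$-small of order $\|\eps-\eps'\|$. The third handles the difference $f\circ T^{-1}_{\eps}-f\circ T^{-1}_{\eps'}$ and is bounded using the quasi-H\"older seminorm together with the oscillation estimate for $f$ over balls of radius $\lesssim\|\eps-\eps'\|$. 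Combining the three estimates yields $\|P_{\eps}f-P_{\eps'}f\|_1\le\tilde C\|\eps-\eps'\|\,\|f\|_{\mathrm{qH}}$, which is~(Lip).

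Once all hypotheses of Lemma~\ref{thm.xtra-conditions} are in place, the lemma gives $\sigma_n^2/n\to\sigma^2$ with $\sigma^2>0$ whenever $\phi$ is not a coboundary for $T$; in particular $\sigma_n^2\sim n\sigma^2$ grows linearly, so the variance growth hypothesis of Theorem~\ref{Main_BV} (requiring $\sigma_n\ge n^{1/4+\delta}$) is satisfied with room to spare. Applying Theorem~\ref{Main_BV} to the sequence $\phi_j\equiv\phi$ then yields the ASIP with a linearly growing variance, i.e.\ $W_n=\sum_{j=0}^{n-1}\phi\circ\TF_j$ is matched by a Brownian motion $B(\sigma^2 n)$ up to an error $o(\sigma_n^{1-\beta})$, which is by definition the standard ASIP with variance $\sigma^2$. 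I expect the bulk of the writing to be the verification of~(Lip) by adapting~\cite[Proposition 4.3]{AFV}; every other ingredient is either immediate or a direct transcription of the one-dimensional argument already given.
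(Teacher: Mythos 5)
Your proposal is correct and follows essentially the same route as the paper: verify (DFLY), (LB), (Pos) and (Lip) for the quasi-H\"older space (the paper likewise transports the one-dimensional covering/distortion argument for (LB) and (Pos), and simply cites Proposition~4.3 of~\cite{AFV} for (Lip) rather than re-deriving the three-term decomposition as you sketch), then invoke Lemma~\ref{thm.xtra-conditions} for linear variance growth and Theorem~\ref{Main_BV} for the ASIP. Your write-up is in fact more explicit than the paper's own proof, which compresses these verifications into a short paragraph of references.
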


\begin{proof}
  The transfer operator is suitably defined on the space of quasi-H\"older
  functions, and on this functional space it satisfies a Doeblin-Fortet-Lasota-Yorke
  inequality. The proof of the lower bound condition~(LB) follows the same path
  taken in the one-dimensional case in Section~\ref{one.dimensional.maps}
   using the distortion bound on the determinants
and Assumption~(ii) which ensures that the images of the domains of local injectivity of any
  concatenation have diameter large enough.  The positivity of the density follows by
  the same argument used for maps of the unit interval since the space of
  quasi-H\"older functions has the nice property that a non-identically
  zero function in such a space is strictly positive on some ball~\cite{S}.
  Finally, Lipschitz continuity has been proved for additive noise in Proposition~4.3 in~\cite{AFV}.
  %
  %
\end{proof}

\subsection{Covering maps: a general class}
We now present a more general class of examples which were introduced in~\cite{BV} to study
metastability for randomly perturbed maps. As before the family ${\cal F}$ will be constructed
 around a given map $T$ which is again defined on the unit interval $M$. We therefore begin to introduce such a map $T$. \\
 {\bf (A1)} There exists a partition $\mathcal{A}=\{A_i:i=1,\dots,m\}$ of $M$, which consists
 of pairwise disjoint intervals $A_i$. Let $\bar A_i :=[c_{i,0},c_{i+1,0}]$. We assume there
 exists $\delta>0$  such that $T_{i,0}:= T|_{(c_{i,0},c_{i+1,0})}$ is $C^2$ and extends to a
  $C^2$ function $\bar T_{i,0}$ on a neighbourhood $[c_{i,0}-\delta,c_{i+1,0}+\delta]$ of $\bar A_i $ ;\\
 {\bf (A2)} There exists $\beta_0<\frac12$ so that $\inf_{x\in I\setminus {\cal C}_0}|T'(x)|\ge\beta_0^{-1}$, where
  ${\cal C}_0=\{c_{i,0}\}_{i=1}^{m}$. \\
 \\ We note that Assumption {\bf (A2)}, more precisely the fact that
 $\beta_0^{-1}$ is strictly bigger than $2$ instead of $1$, is sufficient
 to
 get the uniform Doeblin-Fortet-Lasota-Yorke inequality~\eqref{LLYY} below, as explained in Section 4.2 of \cite{GHW}. We now construct the family ${\cal F}$ by choosing maps $T_{\eps}\in {\cal F}$ close to $T_{\eps=0}:=T$ in the following way:\\
 Each map $T_{\eps}\in {\cal F}$ has $m$ branches and there exists a partition of $M$
 into intervals $\{A_{i,\eps}\}_{i=1}^{m}$, $A_{i,\eps}\cap A_{j,\eps}=\emptyset$ for $i\not= j$, $\bar A_{i,\eps} :=[c_{i,\eps},c_{i+1,\eps}]$ such that\\
 \begin{itemize}
 \item [(i)] for each $i$ one has that
   $[c_{i,0}+\delta,c_{i+1,0}-\delta]\subset
   [c_{i,\eps},c_{i+1,\eps}]\subset [c_{i,0}-\delta,c_{i+1,0}+\delta]$;
   whenever $c_{1,0}=0$ or $c_{q+1},0=1$, we {\em do not move} them with
   $\delta$. In this way we have established a one-to-one correspondence
   between the unperturbed and the perturbed extreme points of $A_i$ and
   $A_{i, \eps}$. (The quantity $\delta$ is from Assumption~(A1) above.)

 \item [(ii)] The map $T_{\eps}$ is locally injective over the closed
   intervals $\overline{A_{i,\eps}}$, of class $C^2$ in their interiors,
   and expanding with $\inf_x|T_{\eps}'x|>2$. Moreover there exists
   $\sigma>0$ such that $\forall T_{\eps} \in {\cal F}, \forall i=1,\cdots,
   m$ and $\forall x \in [c_{i,0}-\delta,c_{i+1,0}+\delta]\cap \overline{A_{i,\eps}}$ where $c_{i,0}$
   and $c_{i, \eps}$ are two (left or right) {\em corresponding points} we
   have:
   \begin{equation}\label{C1}
     |c_{i,0}-c_{i, \eps}|\le \sigma
   \end{equation}
   and
   \begin{equation}\label{C2}
     |\bar{T}_{i,0}(x)-T_{i, \eps}(x)|\le \sigma.
   \end{equation}
 \end{itemize}

 Under these assumptions and by taking, with obvious notations, a
 concatenation of $n$ transfer operators, we have the uniform
 Doeblin-Fortet-Lasota-Yorke
 inequality, namely there exist $\eta\in(0,1)$ and $B<\infty$ such that for all $f\in
 BV$, all $n$ and all concatenations of $n$ maps of ${\cal F}$ we have
\begin{equation}\label{LLYY}
||P_{\eps_n}\circ \cdots\circ P_{\eps_1}f||_{BV}\le \eta^n||f||_{BV}+B||f||_1.
\end{equation}
In order to deal with lower bound condition~(LB), we have to restrict the class
of maps just defined. This class was first introduced in an unpublished,
but circulating, version of~\cite{BV}. A similar class has also been used
in the recent paper~\cite{AR}: both are based on the adaptation to the
sequential setting of the covering conditions introduced formerly by Collet~\cite{C}
and then generalized by Liverani~\cite{LC}. In the latter, the
author studied the Perron-Frobenius operator for a large class of uniformly
piecewise expanding maps of the unit interval $M;$ two ingredients are
needed in this setting. The first is that such an operator satifies the
Doeblin-Fortet-Lasota-Yorke inequality on the pair of adapted spaces
$BV \subset \mathscr{L}^1(m).$
The second is that the cone of functions
$$
{\cal G}_a=\{g\in BV; \ g(x)\neq 0; \ g(x)\ge 0, \forall x\in M; \ \mbox{Var}\ g \le a\int_Mg\,dm\}
$$
for $a>0$ is invariant under the action of the operator. By using the
inequality~\eqref{LLYY} with the norm $\|\cdot\|_{BV}$ replaced by the
total variation $\mbox{Var}$ and using the notation (\ref{o}) for the
arbitrary concatenation of $n$ operators associated to $n$ maps in ${\cal
  F}$ we see immediately that
$$
\forall n, \ \overline{P}_n {\cal G}_a \subset {\cal G}_{ua}
$$
with $0<u<1$, provided we choose $a>B (1-\eta)^{-1}.$ The next result from~\cite{LC} is Lemma 3.2 there, which asserts that given a partition, mod-$0$, ${\cal P}$ of $M$, if each element $p\in {\cal P}$ is a connected interval with Lebesgue measure less than $1/2a,$ then for each $g\in  {\cal G}_a$, there exists $p_0\in {\cal P}$ such that $g(x)\ge \frac12 \ \int_M g \,dm,$ $\forall x\in p_0.$ Before continuing we should  stress that contrarily to the interval maps investigated above,  the domain of injectivity are now (slightly) different from map to map, and in fact we used the notation $A_{i,\eps_k}$ to denote the $i$ domain of injectivity of the map $T_{\eps_k}.$ Therefore the sets (\ref{DM}) will be now denoted as
$$
A_{k_1,\cdots,k_n}^{\eps_1,\cdots,\eps_n}= T^{-1}_{k_{1},\eps_{1}}\circ \cdots \circ T^{-1}_{k_{n-1},\eps_{n-1}}A_{k_n,\eps_n}  \cap \cdots \cap T^{-1}_{k_1,\eps_1}A_{k_2,\eps_2}\cap A_{k_1\eps_1}
$$
Since we have supposed that $\inf_{T_{\eps}\in {\cal F}, i=1,\dots,m, x\in A_{i,\eps}}|DT_{\eps}(x)|\ge \beta_0^{-1}>2,$ it follows that the previous intervals have all  lengths bounded by $\beta_0^n$ independently of the concatenation we have chosen. We are now ready to strengthen the assumptions on our maps by requiring the following condition:

\vspace{2mm}

\noindent {\bf Covering Property:} There exist $n_0$ and $N(n_0)$ such that:\\
(i) The partition into sets $A_{k_1,\cdots,k_{n_0}}^{\eps_1,\cdots,\eps_{n_0}}$ has diameter less than $\frac{1}{2au}.$\\
(ii) For any sequence $\eps_1,\dots,\eps_{N(n_0)}$ and $k_1,\dots,k_{n_0}$ we have
$$
T_{\eps_{N(n_0)}}\circ\cdots\circ T_{\eps_{n_0+1}}A_{k_1,\cdots,k_{n_0}}^{\eps_1,\cdots,\eps_{n_0}}=M
$$

\vspace{2mm}

We now consider $g=1$ and note that for any $l,$ $\overline{P}_l1\in {\cal G}_{ua}$. Then for any
 $n\ge N(n_0),$ we have (from now on using the notation (\ref{o}), we mean that the particular sequence of maps used in the concatenation is irrelevant),
 $\overline{P}^n 1=\overline{P}^{N(n_0)} \overline{P}^{n-N(n_0)}1:= \overline{P}^{N(n_0)} \hat{g},$ where $\hat{g}=\overline{P}^{n-N(n_0)}\ 1.$ By looking at the structure of the sequential operators (\ref{Py}), we see that for any $x\in M$ (apart at most finitely many points for a given concatenation, which is irrelevant since what one  really needs is the $\mathscr{L}^{\infty}_m$ norm  in the
 condition~(LB)), there exists a point $y$ in a set of type $A_{k_1,\cdots,k_{n_0}}^{\eps_1,\cdots,\eps_{n_0}}$, where $\hat{g}(y)\ge \frac12\int_m \hat{g}\,dm,$ and such that $T_{\eps_{N(n_0)}}\circ \cdots \circ T_{\eps_1}y=x.$ This immediately implies that
$$
\overline{P}^n1\ge \frac{1}{2 \beta_M^{N(n_0)}}, \quad \forall \;n\ge N(n_0),
$$
which is the desired result together with the obvious bound
$\overline{P}^l 1\ge \frac{m^{N(n_0)}}{\beta_M},$ for $l<N(n_0),$ and where
 $\beta_M=\sup_{T_{\eps}\in {\cal F}}\max |DT_{\eps}|.$ The positivity condition~(Pos) for the
 density will follow again along the line used before, since the covering condition holds
  in particular for the map $T$ itself. About the continuity~(Lip): looking carefully at the proof
  of the continuity for the expanding map of the intervals, one sees that it  extends
   to the actual case if one gets the following bounds:
\begin{equation}\label{BB}
\left.\begin{array}{r}|T^{-1}_{\eps_1}(x)-T^{-1}_{\eps_2}(x)|\\ |DT_{\eps_1}(x)-DT_{\eps_2}(x)|
\end{array}\right\}=O((|\eps_1-\eps_2|)
\end{equation}
where the point $x$ is in the same domain of injectivity of the maps $T_{\eps_1}$ and $T_{\eps_2}$, the comparison of the {\em same} functions and derivative in two {\em different} points being controlled  controlled  by the condition (\ref{C1}).  The bounds~\eqref{BB} follow easily by adding to~\eqref{C1}, \eqref{C2} the further assumptions that $\sigma=O(\eps)$ and requiring a continuity condition for
derivatives like~\eqref{C2} and with $\sigma$ again being of order $\eps$. With these requirement we can finally state the following theorem
\begin{Theorem}
Let $\F$ be the family of maps constructed  above and  consisting of  the sequence $\{T_{\eps_k}\},$ where the sequence $\{\eps_k\}_{k\ge 1}$ satisfies  $|\eps_k|\le k^{-\theta}$, $\theta > 1/2$. If $\phi$ is not a coboundary for $T$, then
$$
W_n=\sum_{j=0}^{n-1}\phi\circ\TF_j
$$
satisfies a standard  ASIP with variance $\sigma^2$.
\end{Theorem}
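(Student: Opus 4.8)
The plan is to verify the hypotheses of Lemma~\ref{thm.xtra-conditions}, i.e.\ that the family $\F$ constructed in this section satisfies the four conditions~(Exa), (Lip), (Conv) and~(Pos), together with the preliminary structural hypotheses~(DFLY) and~(LB); once these are in place the conclusion that $\sigma_n^2/n\to\sigma^2$ for $\phi$ not a coboundary is immediate, and then an application of Theorem~\ref{Main_BV} (with $T_k=T_{\eps_k}$, reference measure Lebesgue, and the variance growing linearly) gives a standard ASIP with variance $\sigma^2$. The key observation is that almost all of this work has in fact already been carried out in the body of this subsection: the uniform Doeblin-Fortet-Lasota-Yorke inequality~\eqref{LLYY} is~(DFLY), the cone-contraction argument culminating in $\overline P^n 1\ge (2\beta_M^{N(n_0)})^{-1}$ for $n\ge N(n_0)$ together with the bound for $l<N(n_0)$ gives~(LB), the same covering argument applied to the unperturbed map~$T$ gives strict positivity of its invariant density, hence~(Pos), and~(Exa) is just the statement that the single map $T$ (which satisfies~\eqref{LLYY} and the covering property) has a spectral gap in $BV$, which follows from the classical Liverani--Lasota--Yorke theory~\cite{LC}. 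So the proof is mostly a matter of assembling these pieces and checking~(Conv) and the refined form of~(Lip).

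The one point that genuinely needs to be addressed is~(Lip), the Lipschitz dependence $d(P_{\eps_k},P_{\eps_j})\le C_3|\eps_k-\eps_j|$ of the transfer operator on the parameter. First I would point out that, unlike the additive-noise examples of Section~\ref{one.dimensional.maps}, here the domains of injectivity $A_{i,\eps}$ themselves move with~$\eps$, so the estimate~\eqref{BB} is what one needs: a bound $|T^{-1}_{\eps_1}(x)-T^{-1}_{\eps_2}(x)|=O(|\eps_1-\eps_2|)$ and $|DT_{\eps_1}(x)-DT_{\eps_2}(x)|=O(|\eps_1-\eps_2|)$ for $x$ in a common domain of injectivity, with the discrepancy coming from the different positions of the endpoints controlled by~\eqref{C1}. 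Given these two bounds one re-runs verbatim the three-term decomposition $P_{\eps_1}f-P_{\eps_2}f=E_1+E_2+E_3$ from the proof in Section~\ref{one.dimensional.maps}: the boundary term $E_1$ is controlled by $|\eps_1-\eps_2|$ times $\|\hat P_\eps f\|_\infty$, the term $E_2$ comes from differentiating $1/|DT_\eps|$ and is $O(|\eps_1-\eps_2|)\|f\|_1$ via bounded distortion, and $E_3$ uses the variation estimate $\int\sup_{|y-x|\le t}|f(y)-f(x)|\,dx\le 2t\,\mathrm{Var}(f)$ with $t=O(|\eps_1-\eps_2|)$, giving $O(|\eps_1-\eps_2|)\,\mathrm{Var}(f)$. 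Summing yields $\|P_{\eps_1}f-P_{\eps_2}f\|_1\le C_3|\eps_1-\eps_2|\|f\|_{BV}$, which is~(Lip). The bounds~\eqref{BB} themselves are obtained, as the text indicates, by strengthening~\eqref{C1}--\eqref{C2} to $\sigma=O(\eps)$ and imposing an analogous $O(\eps)$-Lipschitz condition on the derivatives $DT_{i,\eps}$.

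Condition~(Conv) is the hypothesis of the theorem itself: $|\eps_k|\le k^{-\theta}$ with $\theta>1/2$ gives $|\eps_k-\eps_0|\le C_4 k^{-\kappa}$ with $\kappa=\theta>1/2$, and combined with~(Lip) this yields, via~\eqref{VC} and the estimate following~\cite[Lemma~2.13]{CR}, the algebraic $\mathscr{L}^1$-convergence $\|\overline P^n\phi - h\int\phi\,dm\|_1=O(n^{-\theta})$ of the concatenated operators to the rank-one projection onto the invariant density~$h$ of~$T$. This convergence rate, being summable-after-squaring precisely because $\theta>1/2$, is exactly what Lemma~\ref{thm.xtra-conditions} requires in order to conclude $\sigma_n^2/n\to\sigma^2>0$. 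With~(Exa), (Lip), (Conv), (Pos), (DFLY) and~(LB) all verified, Lemma~\ref{thm.xtra-conditions} gives the variance asymptotics and Theorem~\ref{Main_BV} (whose hypothesis $\sigma_n\ge n^{1/4+\delta}$ is satisfied with room to spare, since $\sigma_n\sim\sigma\sqrt n$) gives the standard ASIP with variance $\sigma^2$ for $W_n=\sum_{j=0}^{n-1}\phi\circ\TF_j$, completing the proof. The only mild obstacle is bookkeeping: making sure the constants in~\eqref{LLYY}, in the cone argument, and in the covering property are genuinely uniform over all admissible concatenations, but this is guaranteed by the construction of~$\F$ given above.
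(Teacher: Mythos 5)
Your proposal is correct and follows essentially the same route as the paper: the theorem is stated there as the culmination of the preceding discussion, which verifies (DFLY) via \eqref{LLYY}, (LB) via the cone/covering argument, (Pos) by applying the covering property to $T$ itself, and (Lip) by rerunning the $E_1+E_2+E_3$ decomposition using the bounds \eqref{BB} obtained from the strengthened conditions $\sigma=O(\eps)$, after which Lemma~\ref{thm.xtra-conditions} gives $\sigma_n^2/n\to\sigma^2$ and Theorem~\ref{Main_BV} yields the standard ASIP. Your assembly of these pieces, including identifying (Conv) with the hypothesis $\theta>1/2$ and noting that (Exa) follows from the classical spectral-gap theory for the unperturbed map, matches the paper's intended argument.
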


\medskip\noindent\textbf{Acknowledgement}.
AT was partially supported by the Simons Foundation grant 239583.  MN was partially supported by NSF grant  DMS 1101315.
SV was supported by the ANR- Project {\em Perturbations} and by the PICS (Projet International de Coop\'eration Scientifique), {\em Propri\'et\'es
  statistiques des syst\`emes dynamiques det\'erministes et al\'eatoires},
with the University of Houston, n. PICS05968. NH and SV thank the University of
Houston for the support and kind hospitality during the preparation of this
work.
SV also acknowledges discussions with R. Aimino about the content of this work.

\end{document}